\DeclareSymbolFont{cyrletters}{OT2}{wncyr}{m}{n}
\DeclareMathSymbol{\Sha}{\mathalpha}{cyrletters}{"58}
\let\Re\undefined
\let\Im\undefined
\DeclareMathOperator{\Re}{Re}
\DeclareMathOperator{\Im}{Im}
\DeclareMathOperator{\Spec}{Spec}
\newcommand{\bZ}{\mathbb{Z}}
\newcommand{\bQ}{\mathbb{Q}}
\newcommand{\bA}{\mathbb{A}}
\newcommand{\cF}{\mathcal{F}}
\newcommand{\bC}{\mathbb{C}}
\newcommand{\cS}{\mathcal{S}}
\newcommand{\bH}{\mathbb{H}}
\def\Re{\operatorname{Re}}
	\newcommand{\Mod}[1]{\ (\mathrm{mod}\ #1)}
        \newcommand{\sym}{\operatorname{sym}}
	\newcommand{\Geo}{\operatorname{Geo}}
	\newcommand{\K}{\operatorname{K}}
	\newcommand{\s}{\operatorname{sp}}
	\newcommand{\du}{\operatorname{Dual}}
	\newcommand{\Reg}{\operatorname{Reg}}
	\newcommand{\si}{\operatorname{Sing}}
	\newcommand{\sm}{\operatorname{Small}}
	\newcommand{\RNum}[1]{\uppercase\expandafter{\romannumeral #1\relax}}
\begin{document}
\theoremstyle{plain}
	\newtheorem{thm}{Theorem}[section]
	\newtheorem{cor}[thm]{Corollary}
	\newtheorem{thmy}{Theorem}
        \newtheorem{cory}{Corollary}
	\renewcommand{\thethmy}{\Alph{thmy}}
	\newenvironment{thmx}{\stepcounter{thm}\begin{thmy}}{\end{thmy}}
        \newenvironment{corx}{\stepcounter{cor}\begin{cory}}{\end{cory}}

	\renewcommand{\thecory}{\Alph{cory}}
	\newtheorem{hy}[thm]{Hypothesis}
	\newtheorem*{thma}{Theorem A}
	\newtheorem*{corb}{Corollary B}
	\newtheorem*{thmc}{Theorem C}
        \newtheorem*{thmd}{Theorem D}
	\newtheorem{lemma}[thm]{Lemma}  
	\newtheorem{prop}[thm]{Proposition}
	\newtheorem{conj}[thm]{Conjecture}
	\newtheorem{fact}[thm]{Fact}
	\newtheorem{claim}[thm]{Claim}
	
	\theoremstyle{definition}
	\newtheorem{defn}[thm]{Definition}
	\newtheorem{example}[thm]{Example}
	\theoremstyle{remark}
	
	\newtheorem{remark}[thm]{Remark}	
	\numberwithin{equation}{section}
	
%Relative Trace Formula \RNum{1}:, and the Burgess Bound for Twisted $L$-functions

\title[]{A note on a classical relative trace formula}%
\author{Zhining Wei}

\address{Kassar House, 151 Thayer St, Providence, RI 02912 USA}
\email{zhining$\_$wei@brown.edu}

\begin{abstract}
In this note, we derive a relative trace formula (RTF) using classical methods. We obtain a closed formula for the second moment of the central values of holomorphic cusp forms, a result originally established in Kuznetsov's preprint.
\end{abstract}

\date{\today}
\maketitle

%\tableofcontents

\section{Introduction}

In this note, we will derive a relative trace formula (RTF) using classical methods. Our goal is to obtain the closed formula for the eigenvalue-weighted second moment of the central values of holomorphic cusp forms, first established in Kuznetsov's preprint. This result was independently proven by Iwaniec and Sarnak \cite[Theorem 17]{IwaniecSarnak2000}. For our purposes, we use the version presented in \cite[Theorem 4.2]{BF21}.

Let $k>2$ be an even number. Denote by $\cS_k(1)$ the space of holomorphic cusp forms of weight $k$ on $\Gamma=\operatorname{SL}_2(\bZ)$. Let $H_k$ be the Hecke orthogonal basis for $\cS_k(1)$. (This will force $k\geq12.$) We further assume that, for each $f\in H_k,$ $f$ has its Fourier expansion:
\[f(z)=\sum_{n=1}^{\infty}\lambda_f(n)n^{\frac{k-1}{2}}e(nz)\]
with $\lambda_f(1)=1.$ Let $\|f\|$ be the Petersson norm of $f$ and by the Rankin-Selberg theory,
\begin{equation}\label{norm and sym}
    \frac{1}{\|f\|^2}=\frac{(4\pi)^{k-1}2\pi^2}{\Gamma(k)L(1,\sym^2f)}.
\end{equation}

Let $s\in\bC$ with $\Re(s)\gg1.$ The $L$-function, denoted by $L(s,f)$, is defined to be:
\[L(s,f)=\sum_{n=1}^{\infty}\frac{\lambda_f(n)}{n^s}\]
and a direct calculation will show:
\begin{equation}\label{L function as an integral}
(2\pi)^{-(s+\frac{k}{2})}\Gamma\left(s+\frac{k}{2}\right)L\left(s+\frac{1}{2},f\right)=\int_0^{\infty}f(iy)y^{\frac{k}{2}+s}\frac{\,dy}{y}.
\end{equation}

Let $n\geq1$ be an integer. In this note, we will establish a RTF for the weighted second moment
\begin{equation}\label{weighted second moment}    
\sum_{f\in H_k}\frac{\lambda_f(n)L(1/2+s_1,f)L(1/2+s_2,f)}{L(1,\sym^2f)}.
\end{equation}
The main theorem is stated as follows, and it also appears as Theorem 4.2 in \cite{BF21}: 
\begin{thmx}\label{main theorem, weighted moment}
    (Kuznetsov, preprint $1994$) Assume that $\textbf{s}=(s_1,s_2)$ is in the region:
\begin{align*}
\begin{cases}
|\Re(s_1)|<\frac{k}{2}-1, \\ |\Re(s_2)|<\frac{k}{2}-1, 
\end{cases}
\end{align*}
and $s_1-s_2$ is not an integer. Then
\[\frac{2\pi^2}{k-1}\sum_{f\in H_k}\frac{\lambda_f(n)L(1/2+s_1,f)L(1/2+s_2,f)}{L(1,\sym^2f)}=M_2(n;s_1,s_2)+E(n;s_1,s_2),\]
where
\begin{align*}
M_2(n;s_1,s_2)&=\frac{\sigma_{s_1-s_2}(n)}{n^{s_1+1/2}}\zeta(1+s_1+s_2)+\frac{\sigma_{s_1-s_2}(n)}{n^{-s_2+1/2}}\zeta(1-s_1-s_2)\frac{\Gamma(-s_1+k/2)\Gamma(-s_2+k/2)}{(2\pi)^{-2s_1-2s_2}\Gamma(s_1+k/2)\Gamma(s_2+k/2)}\\
    &\hspace{4mm}+(-1)^{k/2}\frac{\sigma_{s_1+s_2}(n)}{n^{s_2+1/2}}\zeta(1-s_1+s_2)\frac{\Gamma(-s_1+k/2)}{(2\pi)^{-2s_1}\Gamma(s_1+k/2)}\\
    &\hspace{4mm}+(-1)^{k/2}\frac{\sigma_{s_1+s_2}(n)}{n^{s_1+1/2}}\zeta(1+s_1-s_2)\frac{\Gamma(-s_2+k/2)}{(2\pi)^{-2s_2}\Gamma(s_2+k)}.
\end{align*}
and 
\begin{align*}
E(n;s_1,s_2)&=\frac{(-1)^{k/2}}{n^{1/2}}\sum_{m=1}^{\infty}\sigma_{s_1-s_2}(m)\sigma_{s_1+s_2}(n+m)\psi_k\left({n},{m};s_1,s_2\right) \\
    &\hspace{4mm}+\frac{1}{n^{1/2}}\sum_{1\leq m\leq n-1}\sigma_{s_1+s_2}(n-m)\sigma_{s_1-s_2}(m)\phi_k(n,m;s_1,s_2)\\
    &\hspace{4mm}+\frac{(-1)^{k/2}}{n^{1/2}}\sum_{m\geq n+1}^{\infty}\sigma_{s_1-s_2}(m)\sigma_{s_1+s_2}(m-n)\Phi_k\left({n},{m};s_1,s_2\right).\\
\end{align*}
Here:
\begin{align*}
\psi_k\left({n},{m};s_1,s_2\right)&=2(2\pi)^{s_1+s_2}\cos\left(\frac{\pi}{2}(s_1-s_2)\right)\frac{m^{s_2}}{(n+m)^{s_1+s_2}}\left(\frac{n}{m}\right)^{k/2}\\
&\hspace{10mm}\times \frac{\Gamma(-s_1+k/2)\Gamma(-s_2+k/2)}{\Gamma(k)}F(-s_2+k/2,-s_1+k/2;k;-n/m);\\
{\phi}_k(n,m;s_1,s_2)&=\frac{(2\pi)^{s_1+s_2+1}}{2\sin\left(\frac{\pi}{2}(s_2-s_1)\right)}\frac{n^{s_2}}{(n-m)^{s_1+s_2}}\\
&\hspace{20mm}\times\frac{\Gamma(k/2-s_2)}{\Gamma(1+s_1-s_2)\Gamma(s_2+k/2)}F(k/2-s_2,1-k/2-s_2;s_1-s_2+1;m/n) \\
&\hspace{2mm}+\frac{(2\pi)^{s_1+s_2+1}}{2\sin\left(\frac{\pi}{2}(s_1-s_2)\right)}\frac{n^{s_1}m^{s_2-s_1}}{(n-m)^{s_1+s_2}}\\
&\hspace{20mm}\times\frac{\Gamma(k/2-s_1)}{\Gamma(1-s_1+s_2)\Gamma(s_1+k/2)}F(k/2-s_1,1-k/2-s_1;-s_1+s_2+1;m/n)\\
\Phi_k\left({n},{m};s_1,s_2\right)&=2(2\pi)^{s_1+s_2}\cos\left(\frac{\pi}{2}(s_1+s_2)\right)\frac{m^{s_2}}{(m-n)^{s_1+s_2}}\left(\frac{n}{m}\right)^{k/2}\\
&\hspace{10mm}\times  \frac{\Gamma(-s_1+k/2)\Gamma(-s_2+k/2)}{\Gamma(k)}F(-s_2+k/2,-s_1+k/2;k;n/m).\\
\end{align*}
Moreover, $(s_1,s_2)=(0,0)$ is a removable singularity.
\end{thmx}

The proof of Theorem \ref{main theorem, weighted moment} can be found in \S \ref{proof of main theorem and main corollary}. Additionally, we are particularly interested in the case $(s_1,s_2)=(0,0),$ which is Corollary \ref{main corollary. reduce to 0}. This corollary will be outlined in \S \ref{proof of main theorem and main corollary}.

The idea is based on the recent work \cite{WeiYangZhao2024}, but we will approach the problem in a purely classical manner.

\textbf{Acknowledgments} I am deeply grateful to Liyang Yang for his valuable suggestions and helpful discussions.

\section{Preliminaries}

Let $n\geq 1$. For $f\in\cS_k(1),$ we define the Hecke operator $T(n)$ by:
\[T(n)f=\sum_{\gamma\in  \Gamma\backslash G_n}f|_k\gamma=\frac{1}{n}\sum_{ad=n}a^{k}\sum_{b\Mod{d}}f\left(\frac{az+b}{d}\right),\]
where $G_n$ contains integral matrices of determinant $n.$ (An explicit definition is given in \S \ref{sec, geometric side}.) If $f\in H_k,$ then $T(n)f=\lambda_f(n)n^{\frac{k-1}{2}}f.$

On the other hand, let $\cF_k$ be an orthnormal basis for $\cS_k(1).$ By \cite[Theorem 1]{Zagier1977},
\[\sum_{f\in\cF_k}(T(n)f)(z)f(z')=C_k^{-1}n^{k-1}h_{k,n}(z,z')\]
where 
\[C_k=\frac{\pi i^k}{2^{k-3}(k-1)}\]
and
\[h_{k,n}(z,z')=\sum_{ad-bc=n}(czz'+dz'+az+b)^{-k}\]
In particular, taking $z=iy$, $z=iy'$ and $\cF_k=H_k$, we obtain: 
\begin{equation}\label{pre-trace}
\sum_{f\in H_k}\frac{(T(n)f)(iy_1)\overline{f(iy_2)}}{\|f\|^2}=C_k^{-1}n^{k-1}\sum_{ad-bc=n}(-cy_1y_2+idy_2+iay_1+b)^{-k}
\end{equation}
\begin{remark}
   In \cite{WeiYangZhao2024}, an adelic kernel function $\K(g_1,g_2)$ is defined, where $g_1,g_2\in\operatorname{GL}_2(\bA_F)$ and $F$ a totally real number field. It can be regarded as the adelic lift of $h_{k,n}(z,z')$ (up to some constant) via the strong approximation. 
\end{remark}
This motivates us, assuming $\mathbf{s}=(s_1,s_2)\in\bC^2$, to consider the function
\begin{equation}\label{relative trace formula function}
J(\mathbf{s},n)=C_k^{-1}n^{k-1}\int_0^{\infty}\int_0^{\infty}\sum_{ad-bc=n}\frac{y_1^{s_1+k/2}y_2^{s_2+k/2}}{(-cy_1y_2+idy_2+iay_1+b)^k}\frac{\,dy_1}{y_1}\frac{\,dy_2}{y_2}.
\end{equation}

\subsection{The spectral side}
Let $\Re(s_1), \Re(s_2)\gg1.$ Substituting \eqref{pre-trace} into \eqref{relative trace formula function} and sapping integrals by \eqref{L function as an integral}, we obtain 
\begin{equation*}
   J(\mathbf{s},n)=(2\pi)^{-(s_1+s_2+k)}n^{\frac{k-1}{2}}\Gamma(s_1+k/2)\Gamma(s_2+k/2)\sum_{f\in H_k}\frac{\lambda_f(n)L(1/2+s_1,f)L(1/2+s_2,f)}{\|f\|^2}
\end{equation*}
Insert \eqref{norm and sym} and we conclude:
\begin{equation}\label{spectral side of RTF}
   J(\mathbf{s},n)=\frac{2^{k-1}\pi }{(2\pi)^{s_1+s_2}}\frac{n^{\frac{k-1}{2}}\Gamma(s_1+k/2)\Gamma(s_2+k/2)}{\Gamma(k)}\sum_{f\in H_k}\frac{\lambda_f(n)L(1/2+s_1,f)L(1/2+s_2,f)}{L(1,\sym^2f)}
\end{equation}
For accuracy, we denote by $J_{\mathrm{Spec}}(\textbf{s},n)$ the integral \eqref{spectral side of RTF}, which is referred to as the \textit{spectral side} of the relative trace formula. This is absolutely convergent when $\Re(s_1),\Re(s_2)>\frac{1}{2}$ Obviously, we can take $s_1=s_2=0$ via the analytic continuation of automorphic $L$-functions.

\subsection{The geometric side}\label{sec, geometric side} 
We set
\[G_n=\left\{\begin{pmatrix}
    a&b\\c&d
\end{pmatrix}\in M_{2,2}(\bZ):ad-bc=n\right\}\]
and
\[R_{\gamma}(z,z'):=(\gamma z+z')j(\gamma,z).\]
We record the following important observation: for $\omega=\begin{pmatrix}
    &-1\\1&
\end{pmatrix}$, and any $\gamma\in\operatorname{GL}_2(\bQ)$,
\begin{equation}\label{identiy for R}
R_{\gamma\omega}(iy_1,iy_2)=R_{\gamma}(i/y_1,iy_2)(iy_1).
\end{equation}

By the defintion of $R_{\gamma}(z,z'),$ $J(\textbf{s},n)$ becomes:
\begin{equation}\label{geometric side of RTF}
J(\mathbf{s},n)=C_k^{-1}n^{k-1}\int_0^{\infty}\int_0^{\infty}\sum_{\gamma\in G_n}\frac{y_1^{s_1+k/2}y_2^{s_2+k/2}}{R_{\gamma}(iy_1,iy_2)^k}\frac{\,dy_1}{y_1}\frac{\,dy_2}{y_2} 
\end{equation}

 Let $J_{\mathrm{Geom}}(\textbf{s},n)$ denote the integral given by \eqref{geometric side of RTF}, which is conventionally referred to as the \textit{geometric side} of the relative trace formula. This is absolutely convergent when $\Re(s_1), \Re(s_2)\gg1.$ . To establish this, a ``regularization'' process is typically required, which is a central ingredient of RTF. (See \cite{Yan23a} or \cite[Theorem 3.2]{Yan23c}.) However, in our case, the regularization is simpler, and we will discuss it in \S \ref{sec. regularization}.

Denote by $\Gamma_{\infty}=\left\{\pm\begin{pmatrix}
    1&m\\&1
\end{pmatrix}:m\in\bZ\right\}$ and $\overline{\Gamma}_{\infty}=\left\{\begin{pmatrix}
    1&m\\&1
\end{pmatrix}:m\in\bZ\right\}.$ 

We set
\begin{align*}
 \Delta_n&=\left\{\begin{pmatrix}
    a&b\\&d
\end{pmatrix}:\mbox{$ad=n,$ $0\leq b<d$, $a,d>0$ and $a,b,d\in\bZ$}\right\},\\
B_n&=\left\{\begin{pmatrix}
    a&b\\&d
\end{pmatrix}:\mbox{$ad=n,$ $b\in \bZ$ and $a,b,d\in\bZ$}\right\}=\Gamma_{\infty}\Delta_n,\\
L_n&=\left\{\begin{pmatrix}
    a&\\b&d
\end{pmatrix}:\mbox{$ad=n,$ $0\neq b\in \bZ$ and $a,b,d\in\bZ$}\right\}\\
\overline{L}_n&=L_n/\{\pm I_2\}=\left\{\begin{pmatrix}
    a&\\b&d
\end{pmatrix}:\mbox{$ad=n,$ $0\neq b\in \bZ$ $a,d>0$ and $a,b,d\in\bZ$}\right\}
\end{align*}

Let $\omega=\begin{pmatrix}
    &-1\\1
\end{pmatrix}.$ We prove the following decomposition of $G_n:$

\begin{lemma}
The Bruhat decomposition for $G_n$ is:
\begin{align*}
    G_n=B_n\bigsqcup B_n\omega\bigsqcup L_n\bigsqcup L_n\omega\bigsqcup\left(\bigsqcup_{m\neq0,-n}\bigsqcup_{\substack{ad=n+m\\ a,d\in\bZ}}\bigsqcup_{\substack{bc=m\\ b,c\in\bZ}}\begin{pmatrix}
        a&b\\c&d
    \end{pmatrix}\right)
\end{align*}
\end{lemma}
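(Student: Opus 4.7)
My plan is a direct case analysis on the zero/nonzero pattern of the four entries of $\gamma = \begin{pmatrix} a & b \\ c & d \end{pmatrix} \in G_n$. I would first right-multiply by $\omega$ to compute explicitly
\[
B_n\omega = \left\{ \begin{pmatrix} b & -a \\ d & 0 \end{pmatrix} : ad = n,\ b \in \bZ \right\}, \qquad L_n\omega = \left\{ \begin{pmatrix} 0 & -a \\ d & -b \end{pmatrix} : ad = n,\ b \neq 0 \right\}.
\]
Because $ad = n \neq 0$ forces both $a \neq 0$ and $d \neq 0$, these descriptions, combined with the definitions of $B_n$ and $L_n$, yield the clean identifications
\[
B_n = \{c = 0\}, \quad B_n\omega = \{d = 0\}, \quad L_n = \{b = 0,\ c \neq 0\}, \quad L_n\omega = \{a = 0,\ d \neq 0\}
\]
as subsets of $G_n$.

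Next I would analyze the nested union. Setting $m := bc$, the index conditions $m \neq 0$ and $m \neq -n$ are equivalent respectively to $bc \neq 0$ and $ad = n + m \neq 0$, so the nested union is exactly the set of $\gamma \in G_n$ with all four entries nonzero, each appearing uniquely through the triple $(m, (a,d), (b,c))$. The decomposition then follows by branching on the entries in the order $c, d, b, a$: the first vanishing entry (if any) places $\gamma$ in $B_n$, $B_n\omega$, $L_n$, or $L_n\omega$ respectively, and if none vanishes then $\gamma$ lies in the nested union. Disjointness is immediate from these mutually incompatible zero/nonzero signatures.

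I do not anticipate any substantive obstacle; the only step requiring care is the explicit form of $B_n\omega$ and $L_n\omega$, since right multiplication by $\omega$ permutes and negates columns and hence converts a column of the original matrix into a specific diagonal entry of the product being zero. Once the four identifications above are in hand, exhaustiveness and disjointness read off at once from the branching.
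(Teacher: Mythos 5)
Your proof is correct and follows essentially the same approach as the paper: both arguments classify elements of $G_n$ by the zero/nonzero pattern of their entries, identifying $B_n$, $B_n\omega$, $L_n$, $L_n\omega$ with the patterns $c=0$, $d=0$, $\{b=0,\,c\neq 0\}$, $\{a=0,\,d\neq 0\}$ respectively, and the big union with all-entries-nonzero. Your write-up is somewhat more explicit than the paper's (which merely lists the patterns), since you actually carry out the right-multiplications by $\omega$ and verify the bijection with the nested union via $m=bc$, but the underlying argument is the same.
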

\begin{proof}
    The first component contains matrices of the form $\begin{pmatrix}
        *&*\\&*
    \end{pmatrix};$

    The second component contains matrices of the form $\begin{pmatrix}
        *&*\\ *&
    \end{pmatrix};$

    The third component contains matrices of the form $\begin{pmatrix}
        *&\\ *&*
    \end{pmatrix},$ with bottom left corner being nonzero;

    The fourth component contains matrices of the form $\begin{pmatrix}
        &*\\ *&*
    \end{pmatrix}$ with bottom right corner being nonzero;

    The last component contains matrices with each entry being nonzero.
\end{proof}

Notice that $k$ is an even integer, we insert those decompositions into \eqref{geometric side of RTF} and we obtain:
\[J_{\Geo}(\textbf{s},n)=\sum_{\delta\in\{I_2,\omega\}}J_{\sm}^{\delta}(\textbf{s},n)+\sum_{\delta\in\{I_2,\omega\}}J_{\du}^{\delta}(\textbf{s},n)+J_{\Reg}(\textbf{s},n),\]
where
\begin{equation}\label{small orbit of RTF}
   J_{\sm}^{\delta}(\textbf{s},n)= 2C_k^{-1}n^{k-1}\int_0^{\infty}\int_0^{\infty}\sum_{\rho\in\Delta_n}\sum_{\gamma\in \overline{\Gamma}_{\infty}}\frac{y_1^{s_1+k/2}y_2^{s_2+k/2}}{R_{\gamma\rho\delta}(iy_1,iy_2)^k}\frac{\,dy_1}{y_1}\frac{\,dy_2}{y_2},
\end{equation}
\begin{equation}\label{dual orbit of RTF}
   J_{\du}^{\delta}(\textbf{s},n)= 2C_k^{-1}n^{k-1}\int_0^{\infty}\int_0^{\infty}\sum_{\gamma\in \overline{L}_{n}}\frac{y_1^{s_1+k/2}y_2^{s_2+k/2}}{R_{\gamma\delta}(iy_1,iy_2)^k}\frac{\,dy_1}{y_1}\frac{\,dy_2}{y_2},
\end{equation}
and
\begin{equation}\label{regular orbit of RTF}
   J_{\Reg}(\textbf{s},n)= C_k^{-1}n^{k-1}\int_0^{\infty}\int_0^{\infty}\sum_{m\neq0,-n}\sum_{\substack{ad=n+m\\ a,d\in\bZ}}\sum_{\substack{bc=m\\ b,c\in\bZ}}\frac{y_1^{s_1+k/2}y_2^{s_2+k/2}}{(-cy_1y_2+idy_2+iay_1+b)^k}\frac{\,dy_1}{y_1}\frac{\,dy_2}{y_2}.
\end{equation}
We will call $J_{\sm}^{\delta}(\textbf{s},n)$ the \textit{small cell orbital integrals}, $J_{\du}^{\delta}(\textbf{s},n)$ the \textit{dual orbital integrals} and $J_{\Reg}(\textbf{s},n)$ the \textit{regular orbital integral}. We also define
\[J_{\si}(\textbf{s},n)=\sum_{\delta\in\{I_2,\omega\}}J_{\sm}^{\delta}(\textbf{s},n)+\sum_{\delta\in\{I_2,\omega\}}J_{\du}^{\delta}(\textbf{s},n),\]
which is the \textit{singular orbital integrals}. 

\begin{comment}
 
\subsection{Outline of the regularization}\label{sec. outline of reg}

Here we outline the idea for the regularization: we will show that each orbital integral has a absolutely convergent region and, that the intersection of those absolutely convergent regions---including the absolutely convergent region of $J_{\mathrm{Spec}}(\textbf{s},n)$---is an nonempty open set. Moreover, we establish that each orbital integral admits a meromorphic continuation to a region containing $\textbf{0}=(0,0).$ More precisely, we will show the meromorphic continuation to the region: $|\Re(s_1)|<\frac{k}{2}-1$ and $|\Re(s_2)|<\frac{k}{2}-1.$ By standard results from complex analysis, this allows us to evaluate $J_{\Geo}(\textbf{0},n)$. 

It should be noted that we cannot evaluate $J_{\Geo}(\textbf{0},n)$ directly, as some orbital integrals have poles at $\textbf{s}=\textbf{0}.$ However, it can be shown that, $\textbf{0}$ s a removable singularity, and thus we can still evaluate $J_{\Geo}(\textbf{0},n)$. This is not unexpected, since the spectral side is holomorphic at $\textbf{s}=\textbf{0}$.

\end{comment}

\subsection{The complex logarithm and complex power functions} In this note, we assume that $z=re^{i\theta}\in\bC$ with $\theta\in(-\pi,\pi]$ and $r>0.$ Denote by $\log$ the complex logarithm function, and its domain is $\bC-(-\infty,0].$ 

\begin{comment}

Then we have the following lemma:
\begin{lemma}\label{lemma. complex log st}
    Let $z_1=r_1e^{i\theta_1}$ and $z_2=r_2e^{i\theta_2}$  be two complex numbers with $\theta_1,\theta_2\in(-\pi,\pi)$.
    \begin{enumerate}
    \item Assume that  $\theta_1+\theta_2\in(-\pi,\pi),$ then $\log (z_1z_2)=\log z_1+\log z_2.$
        \item  Assume that $\theta_1-\theta_2\in(-\pi,\pi),$ then $\log \left(\frac{z_1}{z_2}\right)=\log z_1-\log z_2.$
    \end{enumerate}   
\end{lemma}
When the complex logarithm function is defined, we can also define the complex power function with the same domain by
\[z^{\alpha}:=e^{\alpha\log z}.\]
We have the following similar results for the complex power function:
\begin{lemma}\label{lemma. complex power st}
    Let $z_1=r_1e^{i\theta_1}$ and $z_2=r_2e^{i\theta_2}$  be two complex numbers with $\theta_1,\theta_2\in(-\pi,\pi)$.
    \begin{enumerate}
    \item Assume that  $\theta_1+\theta_2\in(-\pi,\pi),$ then $(z_1z_2)^{\alpha}=z_1^{\alpha} z_2^{\alpha}.$
        \item  Assume that $\theta_1-\theta_2\in(-\pi,\pi),$ then $\left(\frac{z_1}{z_2}\right)^{\alpha}=\frac{z_1^{\alpha}}{z_2^{\alpha}}.$
    \end{enumerate}   
\end{lemma}

\end{comment}

\subsection{Integral representations for hypergeometric functions}
We record the following expressions for the hypergeomertric functions $F(\alpha,\beta;\gamma;z):= {}_{2}F_{1}(\alpha,\beta;\gamma;z)$:
\begin{itemize}
    \item \cite[Equation 3.197(1)]{GradshteynRyzhik2007}:
\begin{equation}\label{integral repn for hypergeometric}
\int_0^{\infty}x^{\nu}(\beta+x)^{-\mu}(x+\gamma)^{-\rho}\frac{\,dx}{x}=\beta^{-\mu}\gamma^{\nu-\rho}B(\nu,\mu-\nu+\rho)F(\mu,\nu;\mu+\rho;1-\gamma/\beta).
\end{equation}
This is valid when $|\arg(\beta)|,|\arg(\gamma)|<\pi$, $\Re(\nu)>0$ and $\Re(\mu)>\Re(\nu-\rho).$
\item \cite[3.197(2)]{GradshteynRyzhik2007}:
\begin{equation}\label{integral repen for hyper, lower u}
  \int_u^{\infty}x^{-\lambda+1}(x+\beta)^{\nu}(x-u)^{\mu-1} \frac{\,dx}{x}=u^{-\lambda}(\beta+u)^{\mu+\nu}B(\lambda-\mu-\nu,\mu)F(\lambda,\mu;\lambda-\nu;-\beta/u).  
\end{equation}
This is valid when $|\arg(u/\beta)|<\pi$ or $\left|\frac{\beta}{u}\right|<1$, $0<\Re(\mu)<\Re(\lambda-\nu)$.
\item \cite[Equation 3.197(8)]{GradshteynRyzhik2007}
\begin{equation}\label{integral repen for hyper, upper u}
  \int_0^{u}x^{\nu}(x+\alpha)^{\lambda}(u-x)^{\mu-1} \frac{\,dx}{x}=\alpha^{\lambda}u^{\mu+\nu-1}B(\mu,\nu)F(-\lambda,\nu;\mu+\nu;-u/\alpha). 
\end{equation}
This is valid when $|\arg(u/\alpha)|<\pi$, $\Re(\mu)>0$ and $\Re(\nu)>0.$
\end{itemize}

\subsection{The divisor functions} Let $v\in\bC.$ We define the divisor function
\[\sigma_v(n)=\sum_{d|n}d^v.\]
Notice that $\sigma_v(n)=n^{v}\sigma_{-v}(n).$ We also define
\[\tau_v(n)=\sum_{n_1n_2=n}n_1^vn_2^{-v}=\frac{\sigma_{2v}(n)}{n^v}.\]
Notice that $\tau_v(n)=\tau_{-v}(n)$ and for any $\varepsilon>0,$
\begin{equation}\label{bound for tau function}
    \tau_v(n)\ll n^{|\Re(v)|+\varepsilon}.
\end{equation}

\subsection{Some possible extensions}
Here are two possible extensions:
\begin{itemize}
\item 
Let $q\geq1$ be an integer and denote by $\Gamma_0(q)$ the Hecke congruence subgroup. Let $n$ be another integer such that $(n,q)=1.$ We can also define the kernel function
\[h_{k,n,q}(z,z')=\sum_{\substack{ad-bc=n\\ q|c}}(czz'+dz'+az+b)^{-k}=\sum_{\gamma\in G_{n}(q)}\frac{1}{j(\gamma,z)^k(\gamma z+z')^k},\]
with $G_{n}(q)=\left\{\begin{pmatrix}
    a&b\\c&d
\end{pmatrix}\in M_{2,2}(\bZ):ad-bc=n,\hspace{2mm}q|c\right\}.$ Following the argument in \cite[Theorem 1]{Zagier1977}, we can show:
\[\sum_{f\in\cF_k(q)}(T(n)f)(z)f(z')=C_k^{-1}n^{k-1}h_{k,n,q}(z,z'),\]
where $\cF_k(q)$ is an orthonormal basis for the holomorphic cusp forms of weight $k$ and level $q.$ (In this case, the Petersson norm is defined over $\Gamma_0(q)\backslash\bH.$) 

We can also investigate the RTF derived from this kernel function. In this case, the spectral side becomes more intricate since we need to exclude the oldforms. However, the geometric side simplifies, as matrices of the form $\begin{pmatrix}
    *&*\\ *
\end{pmatrix}$ and $\begin{pmatrix}
    &*\\ *&*
\end{pmatrix}$ will disappear (since $(n,q)=1$). 

\item
Next, assume that $q\geq1$ be an integer and $\chi$ a primitive character mod $q$ and denote by $\tau(\chi)$ its Gauss sum. Let $n\geq1$ be an integer satisfying $(n,q)=1.$ The well-known identity:
\[\tau(\chi)\overline{\chi(n)}=\sum_{u\Mod{q}}\chi(u)e\left(\frac{un}{q}\right)\] 
motivates us, assuming $\mathbf{s}=(s_1,s_2)\in\bC^2$, to consider the function
\[
J(\mathbf{s},n,\chi)=C_k^{-1}n^{k-1}\sum_{u_1,u_2\pmod{q}}\overline{\chi(u_1)}\chi(u_2)\int_0^{\infty}\int_0^{\infty}\sum_{\gamma\in G_n(q)}\frac{y_1^{s_1+k/2}y_2^{s_2+k/2}}{R_{\gamma}(iy_1+u_1/q,iy_2+u_2/q)^k}\frac{\,dy_1}{y_1}\frac{\,dy_2}{y_2}.
\]
Unfortunately, it is difficult to provide an explicit form for the spectral side, as the oldforms become more complicated when $q$ is not squarefree. However, we can show that
\[\sum_{q'|q}\sum_{f\in H_k^{\operatorname{new}}(q')}\lambda_f(n)|L(1/2,f\times\chi)|^2\ll J((0,0),n,\chi).\]
By analyzing the geometric side of the integral and applying the amplification method, we should be able to establish a Burgess-type bound in the character aspect, that is,
\begin{equation}\label{burgess bound in the character aspect}
 L(1/2,f\times \chi)\ll q^{3/8+\varepsilon}
\end{equation}

In the classical case, the Burgess-type bound \eqref{burgess bound in the character aspect} has been extensively studied (e.g., cf. \cite{Bykovskii1996}, \cite{BlomerHarcos2008}). This bound was later generalized to number fields using the relative trace formula method (cf. \cite{Yan23b}), which, in turn, supports our belief that a Burgess-type bound can be derived from $J(\textbf{s},n,\chi)$.

\end{itemize}

\section{The singular orbital integrals}

\subsection{The small cell orbital integrals}

In this section, we investigate \eqref{small orbit of RTF}:
\[ J_{\sm}^{\delta}(\textbf{s},n)= 2C_k^{-1}n^{k-1}\int_0^{\infty}\int_0^{\infty}\sum_{\rho\in\Delta_n}\sum_{\gamma\in \overline{\Gamma}_{\infty}}\frac{y_1^{s_1+k/2}y_2^{s_2+k/2}}{R_{\gamma\rho}(iy_1,iy_2)^k}\frac{\,dy_1}{y_1}\frac{\,dy_2}{y_2}.\]
The main result in this section is:

\begin{prop}\label{prop. small cell}
    Assume the notations before. 
\begin{itemize}
    \item 
 $ J_{\sm}^{I_2}(\textbf{s},n)$ is absolutely convergent in the region:
\begin{align*}
\begin{cases}
\Re(s_1+s_2)>0\\
\Re(s_1), \hspace{2mm}\Re(s_2)>-\frac{k}{2}. 
\end{cases}
\end{align*}
Moreover, it admits a meromorphic continuation to $\mathbb{C}^2$ given explicitly by 
\begin{equation}\label{main term 1}
 J_{\sm}^{I_2}(\textbf{s},n)= 2C_k^{-1}\frac{i^k\Gamma(s_1+k/2)\Gamma(s_2+k/2)}{(2\pi)^{s_1+s_2}\Gamma(k)}\zeta(1+s_1+s_2)\frac{\sigma_{s_1-s_2}(n)n^{k-1}}{n^{s_1+k/2}} 
\end{equation}

\item $ J_{\sm}^{\omega}(\textbf{s},n)$ is absolutely convergent in the region:
\begin{align*}
\begin{cases}
\Re(-s_1+s_2)>0\\
\Re(s_1)<\frac{k}{2}, \hspace{2mm}\Re(s_2)>-\frac{k}{2}. 
\end{cases}
\end{align*}
Moreover, it admits a meromorphic continuation to $\mathbb{C}^2$ given explicitly by 
\begin{equation}\label{main term 2}
 J_{\sm}^{\omega}(\textbf{s},n)= 2C_k^{-1}\frac{\Gamma(-s_1+k/2)\Gamma(s_2+k/2)}{(2\pi)^{-s_1+s_2}\Gamma(k)}\zeta(1-s_1+s_2)\frac{\sigma_{s_1+s_2}(n)n^{k-1}}{n^{s_2+k/2}}. 
\end{equation}
\end{itemize}

\end{prop}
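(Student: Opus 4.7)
The plan is to evaluate $J_{\sm}^{I_2}(\mathbf{s},n)$ directly by unfolding the orbital integral and invoking a Lipschitz/Hurwitz summation formula, and then to deduce $J_{\sm}^{\omega}(\mathbf{s},n)$ from $J_{\sm}^{I_2}$ using the symmetry identity \eqref{identiy for R}.

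For $J_{\sm}^{I_2}$, write $\rho=\begin{pmatrix}a&b\\&d\end{pmatrix}\in\Delta_n$ and $\gamma=\begin{pmatrix}1&m\\&1\end{pmatrix}\in\overline{\Gamma}_\infty$. The upper-right entry of $\gamma\rho$ is $b+md$, which ranges over all of $\mathbb{Z}$ as $b$ varies modulo $d$ and $m$ over $\mathbb{Z}$; call it $b'$. A direct computation gives $R_{\gamma\rho}(iy_1,iy_2)=b'+i(ay_1+dy_2)$. Substituting $u=ay_1$ and $v=dy_2$ decouples $a,d$ from the integration variables, yielding
\begin{equation*}
J_{\sm}^{I_2}(\mathbf{s},n)=2C_k^{-1}n^{k-1}\biggl(\sum_{\substack{ad=n\\a,d>0}}a^{-s_1-k/2}d^{-s_2-k/2}\biggr)\int_0^{\infty}\!\int_0^{\infty}\sum_{b'\in\mathbb{Z}}\frac{u^{s_1+k/2}v^{s_2+k/2}}{(b'+i(u+v))^k}\,\frac{du}{u}\frac{dv}{v},
\end{equation*}
and the divisor sum collapses to $n^{-s_1-k/2}\sigma_{s_1-s_2}(n)$ via the identity $\sigma_v(n)=n^v\sigma_{-v}(n)$.

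Next, I would apply the Lipschitz/Hurwitz summation identity
\begin{equation*}
\sum_{b'\in\mathbb{Z}}\frac{1}{(b'+iW)^k}=\frac{(2\pi i)^k}{(k-1)!}\sum_{m\geq 1}m^{k-1}e^{-2\pi mW}\qquad(W>0,\ k\geq 2)
\end{equation*}
with $W=u+v$. The resulting series decays exponentially in $u+v$, which legitimises the interchange of $\sum_m$ with the double integral; the $u$- and $v$-integrals then separate into Gamma transforms $\Gamma(s_j+k/2)(2\pi m)^{-s_j-k/2}$, and summing over $m$ produces $\zeta(1+s_1+s_2)$. The claimed absolute convergence region is exactly that of these Gamma-zeta pieces: $\Re(s_j+k/2)>0$ from the Gamma integrals and $\Re(s_1+s_2)>0$ from the zeta series. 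Reassembling all factors (and using that $k$ is even, so $(2\pi i)^k=(2\pi)^k i^k$) produces precisely \eqref{main term 1}; meromorphic continuation to $\mathbb{C}^2$ is then inherited from the meromorphic continuation of $\zeta$.

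For $J_{\sm}^{\omega}$, applying \eqref{identiy for R} with $\gamma$ there replaced by $\gamma\rho$ gives $R_{\gamma\rho\omega}(iy_1,iy_2)^{-k}=i^{-k}y_1^{-k}R_{\gamma\rho}(i/y_1,iy_2)^{-k}$, and the substitution $y_1\mapsto 1/y_1$ (which preserves $dy_1/y_1$) turns $y_1^{s_1+k/2-k}$ into $y_1^{-s_1+k/2}$ and sends $R_{\gamma\rho}(i/y_1,iy_2)$ back to $R_{\gamma\rho}(iy_1,iy_2)$. Hence $J_{\sm}^{\omega}(\mathbf{s},n)=i^{-k}\,J_{\sm}^{I_2}((-s_1,s_2),n)$. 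Since $k$ is even, $i^{-k}\cdot i^k=1$ and the $i^k$ factor in \eqref{main term 1} disappears; and $\sigma_{-s_1-s_2}(n)/n^{-s_1+k/2}=\sigma_{s_1+s_2}(n)/n^{s_2+k/2}$ converts the divisor factor to the form in \eqref{main term 2}. The convergence region is the image under $s_1\mapsto -s_1$, as claimed. The main technical obstacle will be justifying the Fubini interchange cleanly inside the stated window: the naive absolute sum $\sum_{b'}|b'+iW|^{-k}\asymp W^{1-k}$ by itself yields only a strictly smaller region, and it is only after invoking the Lipschitz identity, which replaces the conditionally-cancelling series by one with exponential decay in $u+v$, that the sharp Gamma-zeta conditions become the governing ones.
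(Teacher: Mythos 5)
Your argument is correct and matches the paper's method: both rest on the Lipschitz summation formula for the $\delta=I_2$ case and on \eqref{identiy for R} plus $y_1\mapsto 1/y_1$ for the $\delta=\omega$ case, arriving at the same Gamma--zeta product. The only cosmetic difference is that you absorb $b+md$ into a single $b'\in\mathbb{Z}$ and rescale $u=ay_1$, $v=dy_2$ before invoking Lipschitz, whereas the paper applies Lipschitz first and then collapses the $\sum_{b\bmod d}e(mb/d)$ Ramanujan sum; these are the same computation in a different order.
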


\begin{proof}

We first study the case $\delta=I_2$. Notice that $\gamma=\begin{pmatrix}
    1&m\\&1
\end{pmatrix}\in\overline{\Gamma}_{\infty}$, we have:
\[R_{\gamma\rho}(z,z')=(\rho(z)+z'+m)j(\rho,z).\]
Insert it into $J_{\sm}^{I_2}(\textbf{s},n)$ and we obtain:
\[J_{\sm}^{I_2}(\textbf{s},n)= 2C_k^{-1}n^{k-1}\int_0^{\infty}\int_0^{\infty}\sum_{\rho\in\Delta_n}\frac{y_1^{s_1+k/2}y_2^{s_2+k/2}}{j(\rho,iy_1)^k}\sum_{m\in \bZ}\frac{1}{(\rho(iy_1)+iy_2+m)^k}\frac{\,dy_1}{y_1}\frac{\,dy_2}{y_2}.\]
Recall the following result (e.g., \cite[Equation (1.46)]{Iwaniec1997}): for $k\geq4,$
\begin{equation}\label{key identity}
\sum_{m\in\bZ}\frac{1}{(z+n)^k}=\frac{(-2\pi i)^k}{\Gamma(k)}\sum_{m\geq1}m^{k-1}e(mz).  
\end{equation}
Insert \eqref{key identity} into $J_{\sm}^{I_2}(\textbf{s},n)$ (notice that $j(\rho,iy_1)=d$ if the lower right corner of $\rho$ is $d$):
\begin{align*}
J_{\sm}^{I_2}(\textbf{s},n)&= 2C_k^{-1}n^{k-1}\frac{(-2\pi i)^k}{\Gamma(k)}\int_0^{\infty}\int_0^{\infty}\sum_{\rho\in\Delta_n}\frac{y_1^{s_1+k/2}y_2^{s_2+k/2}}{j(\rho,iy_1)^k}\sum_{m\geq1}m^{k-1}e^{2\pi im\rho(iy_1)}e^{-2\pi my_2}\frac{\,dy_1}{y_1}\frac{\,dy_2}{y_2}\\
&=2C_k^{-1}n^{k-1}\frac{(-2\pi i)^k}{\Gamma(k)}\sum_{m\geq1}m^{k-1}\left(\int_0^{\infty}y_2^{s_2+k/2}e^{-2\pi my_2}\frac{\,dy_2}{y_2}\right)\\
&\hspace{40mm}\times\sum_{d|n}\frac{1}{d^k}\left(\sum_{b\Mod{d}}\int_0^{\infty}y_1^{s_1+k/2}e^{-\frac{2\pi mny}{d^2}}e(mb/d)\frac{\,dy_1}{y_1}\right)
\end{align*}
Recall that, when $\Re(z)>0,$ $\int_0^{\infty}e^{-y}y^z\frac{\,dy}{y}=\Gamma(z).$ Thus, when $\Re(s_1)>-\frac{k}{2}$, $\Re(s_2)>-\frac{k}{2}$ and $\Re(s_1+s_2)>0$, $J_{\sm}^{I_2}(\textbf{s},n)$ is absolutely convergent and we obtain \eqref{main term 1}. It has a meromorphic continuation to $\bC^2$.

\bigskip

Next, we consider the case $\delta=\omega.$ Insert \eqref{identiy for R} into \eqref{small orbit of RTF} and do the change of variable $y_1\mapsto \frac{1}{y_1}$, one has:
\[ J_{\sm}^{\omega}(\textbf{s},n)= 2C_k^{-1}n^{k-1}i^k\int_0^{\infty}\int_0^{\infty}\sum_{\rho\in\Delta_n}\sum_{\gamma\in \overline{\Gamma}_{\infty}}\frac{y_1^{-s_1+k/2}y_2^{s_2+k/2}}{R_{\gamma\rho}(iy_1,iy_2)^k}\frac{\,dy_1}{y_1}\frac{\,dy_2}{y_2}.\]
Following the calculation of $J_{\sm}^{I_2}(\textbf{s},n)$, $J_{\sm}^{\omega}(\textbf{s},n)$ is absolutely convergent in the region
\[\Re(s_1)<\frac{k}{2}, \hspace{8mm} \Re(s_2)>-\frac{k}{2},\hspace{8mm} \Re(-s_1+s_2)>0\]
and one has:
\[
 J_{\sm}^{\omega}(\textbf{s},n)= 2C_k^{-1}\frac{\Gamma(-s_1+k/2)\Gamma(s_2+k/2)}{(2\pi)^{-s_1+s_2}\Gamma(k)}\zeta(1-s_1+s_2)\frac{\sigma_{-s_1-s_2}(n)n^{k-1}}{n^{-s_1+k/2}} 
\]
Recall that $\sigma_v(n)=n^{v}\sigma_{-v}(n)$ and \eqref{main term 2} follows. It also has a meromorphic continuation to $\bC^2$
\end{proof}

\subsection{The dual orbital integrals}
In this section, we study $J_{\du}^{\delta}(\textbf{s},n).$ The main result is:

\begin{prop}\label{prop. dual cell}
    Assume the notations before. 
\begin{itemize}
    \item 
 $ J_{\du}^{I_2}(\textbf{s},n)$ is absolutely convergent in the region:
\begin{align*}
\begin{cases}
\Re(s_1+s_2)>1\\
-\frac{k}{2}+1<\Re(s_1)<\frac{k}{2}, \hspace{2mm} \Re(s_2)<\frac{k}{2}. 
\end{cases}
\end{align*}
Moreover, it admits a meromorphic continuation to $\mathbb{C}^2$ given explicitly by 
\begin{equation}\label{main term 3}
J_{\du}^{I_2}(\textbf{s},n)=2(2\pi)^{s_1+s_2}C_k^{-1}i^k\frac{n^{k-1}\sigma_{s_1-s_2}(n)}{n^{-s_2+k/2}}\frac{\Gamma(-s_1+k/2)\Gamma(-s_2+k/2)}{\Gamma(k)}\zeta(1-s_1-s_2).
\end{equation}

\item $ J_{\du}^{\omega}(\textbf{s},n)$ is absolutely convergent in the region:
\begin{align*}
\begin{cases}
\Re(-s_1+s_2)>1\\
-\frac{k}{2}<\Re(s_1)<\frac{k}{2}-1, \hspace{2mm} \Re(s_2)<\frac{k}{2}. 
\end{cases}
\end{align*}
Moreover, it admits a meromorphic continuation to $\mathbb{C}^2$ given explicitly by 
\begin{equation}\label{main term 4}
    J_{\du}^{\omega}(\textbf{s},n)=2(2\pi)^{-s_1+s_2}C_k^{-1}\frac{n^{k-1}\sigma_{s_1+s_2}(n)}{n^{s_1+k/2}}\frac{\Gamma(s_1+k/2)\Gamma(-s_2+k/2)}{\Gamma(k)}\zeta(1+s_1-s_2).
\end{equation}
\end{itemize}

\end{prop}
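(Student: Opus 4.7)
I will evaluate $J_{\du}^{I_2}$ by iterated integration in $y_1$ then $y_2$, followed by the $b$-sum, and then derive $J_{\du}^{\omega}$ from it via the involution \eqref{identiy for R}. For $\gamma=\begin{pmatrix}a&0\\b&d\end{pmatrix}\in\overline{L}_n$ a direct computation gives the explicit cocycle
$R_\gamma(iy_1,iy_2)=i(ay_1+dy_2)-by_1y_2=(ia-by_2)y_1+idy_2,$
which is linear in $y_1$ and affine-linear in $y_2$, making the double integral amenable to iterated Mellin-type integration.

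\textbf{Main steps for $J_{\du}^{I_2}$.} The $y_1$-integral uses the standard Beta-type identity $\int_0^\infty y^{s-1}(\alpha y+\beta)^{-k}\,dy=\alpha^{-s}\beta^{s-k}B(s,k-s)$ with $s=s_1+k/2$, valid for $-k/2<\Re(s_1)<k/2$, producing $(ia-by_2)^{-(s_1+k/2)}(idy_2)^{s_1-k/2}B(s_1+k/2,k/2-s_1)$. For the $y_2$-integral, I split into $b>0$ and $b<0$; after $u=|b|y_2$, applying \eqref{integral repn for hypergeometric} with $\rho=0$ (the classical $\int_0^\infty u^{\nu-1}(u+\gamma)^{-\mu}\,du=\gamma^{\nu-\mu}B(\nu,\mu-\nu)$) with $\gamma=\mp ia$ yields $|b|^{-(s_1+s_2)}(\mp ia)^{s_2-k/2}B(s_1+s_2,k/2-s_2)$, needing $\Re(s_1+s_2)>0$ and $\Re(s_2)<k/2$. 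Writing $(\mp ia)^{s_2-k/2}=a^{s_2-k/2}e^{\mp i\pi(s_2-k/2)/2}$ and collecting this with the phases $e^{-i\pi(s_1+k/2)}$ (from the $b>0$ substitution $ia-by_2=-(by_2-ia)$) and $(id)^{s_1-k/2}=d^{s_1-k/2}e^{i\pi(s_1-k/2)/2}$ from the first step, all the exponentials simplify via the sum-to-product formula into the single real factor $(-1)^{k/2}\cdot 2\cos(\pi(s_1+s_2)/2)$. The $b$-sum then gives $\zeta(s_1+s_2)$ (requiring $\Re(s_1+s_2)>1$), and $\sum_{ad=n,\,a,d>0}a^{s_2-k/2}d^{s_1-k/2}=n^{s_2-k/2}\sigma_{s_1-s_2}(n)$. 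Finally, the functional equation $\zeta(s)=2^s\pi^{s-1}\sin(\pi s/2)\Gamma(1-s)\zeta(1-s)$ together with $2\sin(\pi x/2)\cos(\pi x/2)=\sin(\pi x)$ and Euler reflection $\sin(\pi x)\Gamma(1-x)=\pi/\Gamma(x)$ converts $2\cos(\pi(s_1+s_2)/2)\zeta(s_1+s_2)$ into $(2\pi)^{s_1+s_2}\zeta(1-s_1-s_2)/\Gamma(s_1+s_2)$; the $\Gamma(s_1+s_2)$ cancels against the product $B(s_1+k/2,k/2-s_1)B(s_1+s_2,k/2-s_2)=\Gamma(k/2-s_1)\Gamma(s_1+s_2)\Gamma(k/2-s_2)/\Gamma(k)$, producing \eqref{main term 3}. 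The stricter lower bound $\Re(s_1)>-k/2+1$ in the absolute-convergence region comes separately from the pointwise estimate $\sum_{b\neq 0}|R_\gamma|^{-k}\lesssim (ay_1+dy_2)^{1-k}(y_1y_2)^{-1}$ followed by a standard Mellin test.

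\textbf{The $\omega$-case and the main obstacle.} The identity $R_{\gamma\omega}(iy_1,iy_2)=R_\gamma(i/y_1,iy_2)\cdot iy_1$ from \eqref{identiy for R}, together with the substitution $y_1\mapsto 1/y_1$, gives $J_{\du}^{\omega}(\textbf{s},n)=i^{-k}J_{\du}^{I_2}((-s_1,s_2),n)$. Since $i^{2k}=1$ for $k$ even and $\sigma_{-s_1-s_2}(n)/n^{-s_2+k/2}=\sigma_{s_1+s_2}(n)/n^{s_1+k/2}$, substituting \eqref{main term 3} with $s_1\mapsto -s_1$ yields \eqref{main term 4} with the correspondingly shifted convergence region. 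The principal technical obstacle is the consistent branch-tracking of the complex powers $(ia-by_2)^{-(s_1+k/2)}$ and $(\mp ia)^{s_2-k/2}$ on the principal logarithm, and the verification that the $b>0$ and $b<0$ phases conspire to yield the real factor $2\cos(\pi(s_1+s_2)/2)\cdot i^k$. A secondary subtlety is that one must evaluate the integrals before invoking any Poisson-like identity such as \eqref{key identity}: the split $\sum_{b\in\bZ}=\sum_{b\neq 0}+\{b=0\}$ produces two pieces that diverge individually throughout the claimed region of absolute convergence, so this decomposition can only be performed after the double integration.
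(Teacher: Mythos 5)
Your proof is correct and reaches the paper's intermediate formula
\[
J_{\du}^{I_2}(\textbf{s},n)=4C_k^{-1}i^k\frac{n^{k-1}\sigma_{s_1-s_2}(n)}{n^{-s_2+k/2}}\frac{\Gamma(k/2-s_1)\Gamma(k/2-s_2)}{\Gamma(k)}\,\Gamma(s_1+s_2)\cos\!\left(\tfrac{\pi}{2}(s_1+s_2)\right)\zeta(s_1+s_2)
\]
before the functional equation, by a genuinely different route. The paper factors out $b^k$ by sign, changes variables $y_1\mapsto d/(by_1)$, $y_2\mapsto a/(by_2)$ to collapse both arithmetic sums into $\zeta(s_1+s_2)\,\sigma_{s_1-s_2}(n)$ at once, and is then left with a single universal integral $J(s_1,s_2,k)=\int_0^\infty\int_0^\infty y_1^{-s_1+k/2}y_2^{-s_2+k/2}(1+iy_1+iy_2)^{-k}\frac{dy_1}{y_1}\frac{dy_2}{y_2}$ (plus its conjugate), which is evaluated by a further substitution, a contour rotation, and two Beta integrals. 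You instead perform iterated Mellin-Beta integration in $(y_1,y_2)$ first, which gives explicit complex-power factors $(ia-by_2)^{-(s_1+k/2)}$, $(idy_2)^{s_1-k/2}$, $(\mp ia)^{s_2-k/2}$ with branches depending on $\operatorname{sgn}(b)$, then sums over $b$ at the very end. I checked your phase bookkeeping: the $b>0$ piece contributes $e^{-i\pi(s_1+s_2+k)/2}$ and the $b<0$ piece $e^{i\pi(s_1+s_2-k)/2}$, which sum to $i^k\cdot 2\cos(\pi(s_1+s_2)/2)$ as you claim; and the Beta product $B(s_1+k/2,k/2-s_1)B(s_1+s_2,k/2-s_2)=\Gamma(k/2-s_1)\Gamma(s_1+s_2)\Gamma(k/2-s_2)/\Gamma(k)$ is exactly right. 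The paper's approach buys a cleaner separation between the arithmetic and the analytic parts (the kernel $J(s_1,s_2,k)$ is computed once with no $a,b,d$ dependence), whereas yours avoids the contour rotation at the cost of branch-tracking across the sign of $b$. Both are essentially rearrangements of the same Mellin computation. Your treatment of the $\omega$ case via $J_{\du}^{\omega}(\textbf{s},n)=i^{-k}J_{\du}^{I_2}((-s_1,s_2),n)$, followed by $\sigma_{-s_1-s_2}(n)/n^{-s_2+k/2}=\sigma_{s_1+s_2}(n)/n^{s_1+k/2}$ and $i^{-k}=i^k$ for $k$ even, matches the paper, which uses the same identity \eqref{identiy for R} and the same variable change.

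Two minor remarks. First, the explicit lower bound $\Re(s_1)>-k/2+1$ is in fact implied by $\Re(s_1+s_2)>1$ and $\Re(s_2)<k/2$, so your separate pointwise-estimate justification is not logically necessary (though the estimate $\sum_{b\neq 0}|R_\gamma|^{-k}\lesssim (ay_1+dy_2)^{1-k}(y_1y_2)^{-1}$ is valid only in the regime $ay_1+dy_2\gtrsim y_1y_2$; for $y_1y_2$ large you need the complementary bound $\lesssim(y_1y_2)^{-k}$, which supplies the constraints $\Re(s_j)<k/2$). Second, your closing ``secondary subtlety'' about the split $\sum_{b\in\bZ}=\sum_{b\neq 0}+\{b=0\}$ is moot here: the set $L_n$ is defined with $b\neq 0$, and the $b=0$ diagonal terms belong to the small-cell piece $\Delta_n$, so no divergent decomposition arises inside $J_{\du}$. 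This remark does not affect the correctness of your argument.
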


\begin{proof}

We first consider the case $\delta=I_2:$
\[ J_{\du}^{I_2}(\textbf{s},n)= 2C_k^{-1}n^{k-1}\int_0^{\infty}\int_0^{\infty}\sum_{\gamma\in \overline{L}_{n}}\frac{y_1^{s_1+k/2}y_2^{s_2+k/2}}{R_{\gamma}(iy_1,iy_2)^k}\frac{\,dy_1}{y_1}\frac{\,dy_2}{y_2}.\]
By the definition of $L_n$ and $R_{\gamma}(iy_1,iy_2),$ this is
\begin{align*}    
J_{\du}^{I_2}(\textbf{s},n)&= 2C_k^{-1}n^{k-1}\int_0^{\infty}\int_0^{\infty}\sum_{ad=n}\sum_{\substack{b\in\bZ\\b\neq0}}\frac{y_1^{s_1+k/2}y_2^{s_2+k/2}}{(-by_1y_2+i(dy_2+ay_1))^k}\frac{\,dy_1}{y_1}\frac{\,dy_2}{y_2}\\
&= 2C_k^{-1}n^{k-1}\int_0^{\infty}\int_0^{\infty}\sum_{ad=n}\sum_{b=1}^{\infty}\frac{y_1^{s_1-k/2}y_2^{s_2-k/2}}{b^k\left(-1+i\left(\frac{d}{by_1}+\frac{a}{by_2}\right)\right)^k}\frac{\,dy_1}{y_1}\frac{\,dy_2}{y_2}\\
&\hspace{5mm}+2C_k^{-1}n^{k-1}\int_0^{\infty}\int_0^{\infty}\sum_{ad=n}\sum_{b=1}^{\infty}\frac{y_1^{s_1-k/2}y_2^{s_2-k/2}}{b^k\left(1+i\left(\frac{d}{by_1}+\frac{a}{by_2}\right)\right)^k}\frac{\,dy_1}{y_1}\frac{\,dy_2}{y_2}
\end{align*}
Assume that $\Re(s_1+s_2)>1,$ and do the change of variable $y_1\mapsto \frac{d}{by_1}$ and $y_2\mapsto\frac{a}{by_2},$ which yields: 
\begin{align*}
J_{\du}^{I_2}(\textbf{s},n)= 2C_k^{-1}\zeta(s_1+s_2)\frac{n^{k-1}\sigma_{s_1-s_2}(n)}{n^{-s_2+k/2}}(J(s_1,s_2,k)+\overline{J(\overline{s_1},\overline{s_2},k)})
\end{align*}
where
\[J(s_1,s_2,k)=\int_0^{\infty}\int_0^{\infty}\frac{y_1^{-s_1+k/2}y_2^{-s_2+k/2}}{(1+iy_1+iy_2)^k}\frac{\,dy_1}{y_1}\frac{\,dy_2}{y_2}.\]
Next, we do the change of variable $y_2\mapsto \frac{1}{y_2}$ and $y_1\mapsto \frac{y_1}{y_2}$:
\[J(s_1,s_2,k)=\int_0^{\infty}\int_0^{\infty}\frac{y_2^{s_1+s_2}y_1^{-s_1+k/2}}{(y_2+i(y_1+1))^k}\frac{\,dy_2}{y_2}\frac{\,dy_1}{y_1}.\]
Following the method in \cite[Proposition 2.2]{RR05}, the integral is convergent when $\Re(s_1)>-\frac{k}{2}+1$ and $\Re(s_2)<\frac{k}{2}.$

We first study the inner integral of $y_2.$ We define the function $h(z)=(z+i(y_1+1))^{-k}e^{(s_1+s_2-1)\log z}.$ This is a holomorphic function when $-\frac{1}{10}<\arg(z)<\frac{\pi}{2}+\frac{1}{10}.$ Let $R>0.$ We consider the sector contour constructed by the line segment connecting $0$ and $R,$ the arc connecting $R$ and $iR$ and the line segment connecting $iR$ and $0.$ By Cauchy's theorem, one has: when $R\to\infty,$
\[J(s_1,s_2,k)=\frac{e^{\frac{\pi i}{2}(s_1+s_2)}}{i^k}\int_0^{\infty}y_1^{-s_1+k/2}\int_0^{\infty}\frac{y_2^{s_1+s_2}}{(y_2+y_1+1)^k}\frac{\,dy_2}{y_2}\frac{\,dy_1}{y_1}.\]
Then the change of variable $y_2\mapsto y_2(y_1+1)$ implies:
\[J(s_1,s_2,k)=\frac{e^{\frac{\pi i}{2}(s_1+s_2)}}{i^k}\int_0^{\infty}y_1^{-s_1+k/2}(y_1+1)^{s_1+s_2-k}\int_0^{\infty}\frac{y_2^{s_1+s_2}}{(y_2+1)^k}\frac{\,dy_2}{y_2}\frac{\,dy_1}{y_1}.\]
Recall that, for $\Re(z_1),\Re(z_2)>0,$ we have
\begin{equation}\label{B function}
    \frac{\Gamma(z_1)\Gamma(z_2)}{\Gamma(z_1+z_2)}=B(z_1,z_2)=\int_0^{\infty}\frac{t^{z_1}}{(t+1)^{z_1+z_2}}\frac{\,dt}{t}.
 \end{equation}
This implies: when $\Re(s_1),\Re(s_2)<\frac{k}{2}$ and $\Re(s_1+s_2)>0,$
\[J(s_1,s_2,k)=\frac{e^{\frac{\pi i}{2}(s_1+s_2)}}{i^k}\frac{\Gamma(-s_1+k/2)\Gamma(-s_2+k/2)\Gamma(s_1+s_2)}{\Gamma(k)}.\]
Insert it into $J_{\du}^{I_2}(\textbf{s},n)$:
\[J_{\du}^{I_2}(\textbf{s},n)=4C_k^{-1}i^k\frac{n^{k-1}\sigma_{s_1-s_2}(n)}{n^{-s_2+k/2}}\frac{\Gamma(-s_1+k/2)\Gamma(-s_2+k/2)}{\Gamma(k)}\Gamma(s_1+s_2)\cos\left(\frac{\pi}{2}(s_1+s_2)\right)\zeta(s_1+s_2).\]
Recall the functional equation for Riemann zeta function:
\[\zeta(1-s)=\pi^{-s}2^{1-s}\Gamma(s)\cos\left(\frac{\pi s}{2}\right)\zeta(s)\]
and we conclude \eqref{main term 3}.

When $\delta=\omega,$ we apply \eqref{identiy for R} and change of variable $y_1\mapsto \frac{1}{y_1}:$
\[J_{\du}^{\omega}(\textbf{s},n)= 2C_k^{-1}n^{k-1}i^k\int_0^{\infty}\int_0^{\infty}\sum_{\gamma\in \overline{L}_{n}}\frac{y_1^{-s_1+k/2}y_2^{s_2+k/2}}{R_{\gamma}(iy_1,iy_2)^k}\frac{\,dy_1}{y_1}\frac{\,dy_2}{y_2}.\]
The argument in $\delta=I_2$ case derives:
\[J_{\du}^{\omega}(\textbf{s},n)=4C_k^{-1}i^k\frac{n^{k-1}\sigma_{-s_1-s_2}(n)}{n^{-s_2+k/2}}\frac{\Gamma(s_1+k/2)\Gamma(-s_2+k/2)}{\Gamma(k)}\Gamma(-s_1+s_2)\cos\left(\frac{\pi}{2}(-s_1+s_2)\right)\zeta(-s_1+s_2)\]

By the functional equation for Riemann zeta function and  $\sigma_v(n)=n^{v}\sigma_{-v}(n)$, we conclude \eqref{main term 4}.

\end{proof}

\subsection{The singular orbital integrals: a summary}
In this section, we collect all the results of singular orbital integrals.  

\begin{prop}\label{prop. singular}
 The singular orbital integral $J_{\si}(\textbf{s},n)$ is absolutely convergent in the region:
    \begin{align*}
\begin{cases}
\Re(s_1+s_2)>1, \hspace{2mm} \Re(-s_1+s_2)>1\\
|\Re(s_1)|<\frac{k}{2}-1, \hspace{2mm} |\Re(s_2)|<\frac{k}{2}. 
\end{cases}
\end{align*}
It has a meromorphic continuation to $\bC^2$ given explicitly by
\begin{align*}
    J_{\si}(\textbf{s},n)&=2C_k^{-1}\frac{i^k\Gamma(s_1+k/2)\Gamma(s_2+k/2)}{(2\pi)^{s_1+s_2}\Gamma(k)}\zeta(1+s_1+s_2)\frac{\sigma_{s_1-s_2}(n)n^{k-1}}{n^{s_1+k/2}} \\
    &\hspace{4mm}+2(2\pi)^{s_1+s_2}C_k^{-1}i^k\frac{n^{k-1}\sigma_{s_1-s_2}(n)}{n^{-s_2+k/2}}\frac{\Gamma(-s_1+k/2)\Gamma(-s_2+k/2)}{\Gamma(k)}\zeta(1-s_1-s_2)\\
    &\hspace{4mm}+2C_k^{-1}\frac{\Gamma(-s_1+k/2)\Gamma(s_2+k/2)}{(2\pi)^{-s_1+s_2}\Gamma(k)}\zeta(1-s_1+s_2)\frac{\sigma_{s_1+s_2}(n)n^{k-1}}{n^{s_2+k/2}}\\
    &\hspace{4mm}+2(2\pi)^{-s_1+s_2}C_k^{-1}\frac{n^{k-1}\sigma_{s_1+s_2}(n)}{n^{s_1+k/2}}\frac{\Gamma(s_1+k/2)\Gamma(-s_2+k/2)}{\Gamma(k)}\zeta(1+s_1-s_2).
\end{align*}
\end{prop}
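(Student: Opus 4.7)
The plan is to assemble Proposition~\ref{prop. small cell} and Proposition~\ref{prop. dual cell} additively, since by construction
\[
J_{\si}(\textbf{s},n)=J_{\sm}^{I_2}(\textbf{s},n)+J_{\sm}^{\omega}(\textbf{s},n)+J_{\du}^{I_2}(\textbf{s},n)+J_{\du}^{\omega}(\textbf{s},n).
\]
Each of the four summands has already been shown to be absolutely convergent in an explicit tube domain in $\bC^2$ and to admit a meromorphic continuation to all of $\bC^2$, so the same holds for the sum. The region of absolute convergence of $J_{\si}(\textbf{s},n)$ is simply the intersection of the four regions listed in Propositions~\ref{prop. small cell} and \ref{prop. dual cell}.

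First I would verify that this intersection is precisely the region claimed in Proposition~\ref{prop. singular}. The two conditions $\Re(s_1+s_2)>0$ and $\Re(s_1+s_2)>1$ combine to the stronger $\Re(s_1+s_2)>1$, and similarly $\Re(-s_1+s_2)>1$ dominates $\Re(-s_1+s_2)>0$. For the real parts of $s_1$, intersecting $\Re(s_1)>-k/2$, $\Re(s_1)<k/2$, $-k/2+1<\Re(s_1)<k/2$ and $-k/2<\Re(s_1)<k/2-1$ yields exactly $|\Re(s_1)|<k/2-1$; intersecting the $s_2$ constraints similarly yields $|\Re(s_2)|<k/2$. This matches the stated region.

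Second, I would read off the explicit formula for the meromorphic continuation by simply summing the four closed-form expressions \eqref{main term 1}, \eqref{main term 2}, \eqref{main term 3}, \eqref{main term 4} in that order. No cancellation or simplification is needed for the statement of Proposition~\ref{prop. singular}, which just records the sum verbatim. Since each summand is already a meromorphic function of $\textbf{s}\in\bC^2$ (a product of Gamma factors, a zeta value, and divisor-sum terms), their sum is meromorphic on $\bC^2$.

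There is essentially no obstacle here; the content of Proposition~\ref{prop. singular} is bookkeeping. The only point worth double-checking is the claim that the intersection of the four convergence domains is nonempty and contains the strip described, which follows once one picks, e.g., $\Re(s_1)=0$ and $\Re(s_2)$ slightly larger than $1$ with $k$ large enough (recall $k\geq 12$). This confirms that the four meromorphic continuations agree with the original integral on a common open set, so the summation identity extends by analytic continuation.
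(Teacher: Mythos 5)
Your proposal is correct and follows essentially the same route as the paper: both take the intersection of the four convergence regions from Propositions~\ref{prop. small cell} and \ref{prop. dual cell} to obtain the region, invoke the fact that a finite sum of meromorphic functions is meromorphic, and read off the explicit formula by adding \eqref{main term 1}--\eqref{main term 4}. Your check that the intersection is precisely the stated strip is slightly more explicit than the paper's remark that the region is "contained in" each of the four regions, but the content is the same.
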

\begin{proof}
    Notice that the region described in Proposition \ref{prop. singular} is contained in the absolutely convergent regions in Proposition \ref{prop. small cell} and \ref{prop. dual cell}. This yields the absolutely convergent region for $J_{\si}(\textbf{s},n).$ 
    
    Furthermore, each orbital integral has a meromorphic continuation to $\bC^2.$ So will $J_{\si}(\textbf{s},n).$ The explicit form is obtained by adding \eqref{main term 1}, \eqref{main term 3}, \eqref{main term 2} and \eqref{main term 4} together.
\end{proof}

\section{The regular orbital integral}

\begin{comment}
We divide the whole regular orbital integral into 3 parts: $m\geq1$; $-n<m<0$ and $m<-n.$ Then we have:
\[J_{\Reg}(\textbf{s},n)=J_{\Reg}^1(\textbf{s},n)+J_{\Reg}^2(\textbf{s},n)+J_{\Reg}^3(\textbf{s},n)\]
where
\begin{align*}
J_{\Reg}^1(\textbf{s},n)&= C_k^{-1}n^{k-1}\int_0^{\infty}\int_0^{\infty}\sum_{m\geq1}\sum_{\substack{ad=n+m\\ a,d\in\bZ}}\sum_{\substack{bc=m\\ b,c\in\bZ}}\frac{y_1^{s_1+k/2}y_2^{s_2+k/2}}{(-cy_1y_2+idy_2+iay_1+b)^k}\frac{\,dy_1}{y_1}\frac{\,dy_2}{y_2}   \\
J_{\Reg}^2(\textbf{s},n)&= C_k^{-1}n^{k-1}\int_0^{\infty}\int_0^{\infty}\sum_{-n<m<0}\sum_{\substack{ad=n+m\\ a,d\in\bZ}}\sum_{\substack{bc=m\\ b,c\in\bZ}}\frac{y_1^{s_1+k/2}y_2^{s_2+k/2}}{(-cy_1y_2+idy_2+iay_1+b)^k}\frac{\,dy_1}{y_1}\frac{\,dy_2}{y_2}   \\
J_{\Reg}^3(\textbf{s},n)&= C_k^{-1}n^{k-1}\int_0^{\infty}\int_0^{\infty}\sum_{m<-n}\sum_{\substack{ad=n+m\\ a,d\in\bZ}}\sum_{\substack{bc=m\\ b,c\in\bZ}}\frac{y_1^{s_1+k/2}y_2^{s_2+k/2}}{(-cy_1y_2+idy_2+iay_1+b)^k}\frac{\,dy_1}{y_1}\frac{\,dy_2}{y_2}   \\
\end{align*}  
\end{comment}

We introduce the following notations:
\[\bigsqcup_{m\neq0,-n}\bigsqcup_{\substack{ad=n+m\\ a,d\in\bZ}}\bigsqcup_{\substack{bc=m\\ b,c\in\bZ}}\begin{pmatrix}
        a&b\\c&d
    \end{pmatrix}=\Omega_1\bigsqcup \Omega_2\bigsqcup \Omega_3,\]
where
\begin{align*}
    \Omega_1&=\bigsqcup_{m\geq1}\bigsqcup_{\substack{ad=n+m\\ a,d\in\bZ}}\bigsqcup_{\substack{bc=m\\ b,c\in\bZ}}\begin{pmatrix}
        a&b\\c&d
    \end{pmatrix}\\
    \Omega_2&=\bigsqcup_{-n<m<0}\bigsqcup_{\substack{ad=n+m\\ a,d\in\bZ}}\bigsqcup_{\substack{bc=m\\ b,c\in\bZ}}\begin{pmatrix}
        a&b\\c&d
    \end{pmatrix}\\
    \Omega_3&=\bigsqcup_{m\leq-n-1}\bigsqcup_{\substack{ad=n+m\\ a,d\in\bZ}}\bigsqcup_{\substack{bc=m\\ b,c\in\bZ}}\begin{pmatrix}
        a&b\\c&d
    \end{pmatrix}
\end{align*}
Recall that $\omega=\begin{pmatrix}&-1\\1&\end{pmatrix}$, and we have the following relations:
\[\Omega_1\omega=\Omega_3,\hspace{10mm}\Omega_2\omega=\Omega_2,\hspace{10mm}\Omega_3\omega=\Omega_1\]
and hence
\[\bigsqcup_{m\neq0,-n}\bigsqcup_{\substack{ad=n+m\\ a,d\in\bZ}}\bigsqcup_{\substack{bc=m\\ b,c\in\bZ}}\begin{pmatrix}
        a&b\\c&d
    \end{pmatrix}=\Omega_1\omega\bigsqcup \Omega_2\omega\bigsqcup \Omega_3\omega.\]
Then set $\overline{\Omega}_j=\Omega_j/\sim$, where $g_1\sim g_2$ if $g_1=-g_2.$ Then we define:
\[J_{\Reg}^j(\textbf{s},n)=\frac{n^{k-1}}{C_k}\int_0^{\infty}\int_0^{\infty}\sum_{\gamma\in\Omega_j}\frac{y_1^{s_1+k/2}y_2^{s_2+k/2}}{R_{\gamma\omega}(iy_1,iy_2)^k}\frac{\,dy_1}{y_1}\frac{\,dy_2}{y_2}=\frac{2n^{k-1}}{C_k}\int_0^{\infty}\int_0^{\infty}\sum_{\gamma\in\overline{\Omega}_j}\frac{y_1^{s_1+k/2}y_2^{s_2+k/2}}{R_{\gamma\omega}(iy_1,iy_2)^k}\frac{\,dy_1}{y_1}\frac{\,dy_2}{y_2}.\]

The main result that we will prove in the following sections is:
\begin{prop}\label{prop. regular orbital integral}
    The regular orbital integral $J_{\Reg}(\textbf{s},n)$ is absolutely convergent in the region: 
    \begin{align*}
\begin{cases}
|\Re(s_1)|<\frac{k}{2}-1\\
|\Re(s_2)|<\frac{k}{2}-1. 
\end{cases}
\end{align*}
and
\[J_{\Reg}(\textbf{s},n)=J_{\Reg}^1(\textbf{s},n)+J_{\Reg}^2(\textbf{s},n)+J_{\Reg}^3(\textbf{s},n).\]
Furthermore, if we assume that $s_1-s_2$ is not an integer, then $J_{\Reg}^1(\textbf{s},n)$ $J_{\Reg}^2(\textbf{s},n)$ and $J_{\Reg}^3(\textbf{s},n)$ are explicitly given by \eqref{error term 1}, \eqref{error term 2} and \eqref{error term 3}.
\end{prop}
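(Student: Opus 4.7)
The plan is to treat $J_{\Reg}^1, J_{\Reg}^2, J_{\Reg}^3$ in parallel. In each case I first apply \eqref{identiy for R} together with the change of variable $y_1 \mapsto 1/y_1$ to replace $R_{\gamma\omega}$ by $R_\gamma$, bringing the integrand to the familiar shape $y_1^{-s_1+k/2} y_2^{s_2+k/2}\bigl(b - cy_1 y_2 + i(ay_1 + dy_2)\bigr)^{-k}$ up to an overall factor $i^{-k}$. I then split the sum over $\overline{\Omega}_j$ into its sign classes of the quadruple $(a,d,b,c)$ (two classes in each $\overline{\Omega}_j$, modulo $\pm I_2$) and evaluate the double integral separately in each class.

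The denominator $R_\gamma(iy_1, iy_2) = (b + idy_2) + iy_1(a + icy_2)$ is affine in $y_1$, so the $y_1$-integral is a beta integral:
\[
\int_0^\infty y^{\nu - 1}(y + z)^{-k}\,dy = z^{\nu - k}\,\frac{\Gamma(\nu)\,\Gamma(k - \nu)}{\Gamma(k)},\qquad 0 < \Re \nu < k,\ z \notin (-\infty, 0],
\]
applied with $\nu = -s_1 + k/2$; this produces $A^{-s_1-k/2} B^{s_1-k/2}\, \Gamma(-s_1+k/2)\Gamma(s_1+k/2)/\Gamma(k)$ with $A = b + idy_2$, $B = i(a + icy_2)$, and forces $|\Re s_1| < k/2$. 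After the rescaling $y_2 \mapsto (|b|/d) y_2$ and pulling out $b$ from $A$ and $a$ from $a + icy_2$, the $y_2$-integral takes the model form $\int_0^\infty y^{s_2 + k/2 - 1}(1 + \epsilon_1 iy)^{-s_1-k/2}(1 + \epsilon_2 i\lambda y)^{s_1-k/2}\,dy$ with $|\lambda| = |m|/|n+m|$ and $\epsilon_1,\epsilon_2 \in \{\pm 1\}$ depending on the sign class. Rotating the contour $y \mapsto -\epsilon_1 iy$ through the appropriate quadrant---legitimate thanks to the decay $y^{\Re s_2 - k/2 - 1}$ at infinity, requiring $\Re s_2 < k/2$---converts this to a real integral, and \eqref{integral repn for hypergeometric} evaluates it in terms of $F(s_1 + k/2,\, s_2 + k/2;\, k;\, 1 - 1/\lambda)$, with $1 - 1/\lambda \in \{-n/m,\, m/n,\, n/m\}$ on $\Omega_1, \Omega_2, \Omega_3$ respectively. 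An Euler transformation $F(\alpha, \beta; \gamma; z) = (1 - z)^{\gamma - \alpha - \beta} F(\gamma - \alpha, \gamma - \beta; \gamma; z)$ converts this hypergeometric to the one appearing in $\psi_k$ or $\Phi_k$ at the price of absorbing a $(1-z)^{-s_1-s_2}$ into the $(n \pm m)^{-s_1-s_2}$-type prefactor; on $\Omega_2$, where the argument $m/n$ lies inside the disc of convergence, the two sign classes will instead produce the two $\phi_k$-terms related by $s_1 \leftrightarrow s_2$.

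It remains to sum over divisors and over $m$. The divisors $a, d$ with $ad = n+m$ (or $|m|-n$ on $\Omega_3$) collapse to $\sigma_{s_1+s_2}$ of the appropriate argument, while $b, c$ with $|bc| = |m|$ collapse to $\sigma_{s_1 - s_2}(|m|)$, and the residual powers of $n, m$ re-assemble into the $(n \pm m)^{-s_1-s_2}$-type factors in $\psi_k, \phi_k, \Phi_k$. Summing the two sign classes of $(b,c)$ in $\Omega_1$ and $\Omega_3$ gives twice the real part of the integral, which---together with the branch factors $i^{s_1 \pm k/2}$, $i^{s_2 \pm k/2}$ coming out of the two contour rotations---assembles into the $\cos(\pi(s_1 \pm s_2)/2)$ factor of $\psi_k$ and $\Phi_k$; on $\Omega_2$ the two sign classes instead combine as a difference of hypergeometrics with parameters $s_1 \leftrightarrow s_2$, yielding the $(\sin(\pi(s_2 - s_1)/2))^{-1}$-weighted combination in $\phi_k$, which is exactly the reason we require $s_1 - s_2 \notin \mathbb{Z}$. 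Absolute convergence of the outer sum over $m$ follows from \eqref{bound for tau function} combined with $F(\cdot, \cdot; \cdot; \pm n/m) = 1 + O(n/m)$ as $m \to \infty$ and the prefactor $(n/m)^{k/2}$ on $\Omega_1, \Omega_3$; together these force $|\Re s_j| < k/2 - 1$, the ``$-1$'' being precisely what one needs to dominate $\sigma_{s_1 \pm s_2}(m) \ll m^{|\Re s_j| + \varepsilon}$. The main obstacle is the bookkeeping with branch cuts: one must faithfully track the fractional powers $i^{s_1 \pm k/2}$ and $i^{s_2 \pm k/2}$ through the two contour rotations, and verify that adding (or, on $\Omega_2$, subtracting) contributions from conjugate sign classes produces exactly the $\cos$ or $\sin^{-1}$ factors---and not spurious imaginary pieces---demanded by $\psi_k, \phi_k, \Phi_k$.
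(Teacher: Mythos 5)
Your outline for $J_{\Reg}^1$ and $J_{\Reg}^3$ is workable, though it differs from the paper in the order of integration: the paper factors the denominator as affine in $y_2$ and does the $y_2$-integral first, whereas you do the $y_1$-integral first. In both orders, for these two pieces the two branch points of the remaining one-variable integral land on the same side of the real axis, so the contour rotation you invoke is legitimate, and \eqref{integral repn for hypergeometric} plus a Euler transformation does recover $\psi_k$ and $\Phi_k$. Your convergence argument (via \eqref{bound for tau function}, the prefactor $(n/m)^{k/2}$, and $F\to 1$) matches the paper's.

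The gap is in $J_{\Reg}^2$, and it is the central difficulty of the proposition. First, the argument of the hypergeometric you would get from \eqref{integral repn for hypergeometric} on $\Omega_2$ is $n/m$ (or $n/(n-m)$ depending on which factor you call $\beta$), not $m/n$ as you claim; since $1\le m\le n-1$, this lies outside the unit disc, so the formula and the subsequent Euler transformation do not apply. Second, and more fundamentally, on $\Omega_2$ the two branch points of the $y_2$-integrand (at $y_2 = -ia/c$ and $y_2 = ib/d$ with all of $a,b,c,d>0$, in the paper's reparametrization) sit on \emph{opposite} sides of the real axis, so the single sector-contour rotation you propose is obstructed by a branch cut. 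The two $\phi_k$-terms with $s_1\leftrightarrow s_2$ do not come from the two sign classes in $\overline{\Omega}_2$; those two classes are complex conjugates of one another (this is the $U+\overline{U(\bar s)}$ pairing) and contribute to the real part exactly as in $\Omega_1,\Omega_3$. The two terms instead come from the Gauss connection formula for ${}_2F_1$ across the singular point: the paper rotates the $y_1$-contour, splits the resulting real integral at $y_1 = d/c$ (the location where the rotated contour meets the branch cut), applies two \emph{different} integral representations \eqref{integral repen for hyper, lower u} and \eqref{integral repen for hyper, upper u} on the two pieces, and then invokes \cite[Equation 9.132(1)]{GradshteynRyzhik2007} to re-expand the resulting ${}_2F_1$ with argument $-(n-m)/m$ in terms of hypergeometrics with argument $m/n$ and third parameter $1\pm(s_1-s_2)$; the $\Gamma(s_1-s_2)$, $\Gamma(s_2-s_1)$ in that connection formula produce the $1/\sin(\tfrac{\pi}{2}(s_1-s_2))$ via Euler's reflection, and this is also precisely where the hypothesis $s_1-s_2\notin\bZ$ is used. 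None of this is present in your sketch, so the $\phi_k$-term is not actually derived.
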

\begin{proof}
This is a direct corollary of Proposition \ref{prop. error 1}, Proposition \ref{prop. error 2} and Proposition \ref{prop. error 3}.
\end{proof}

\begin{remark}
    By the absolutely convergent region of the regular orbital integral, we can directly take $(s_1,s_2)=(0,0)$ and use Legendre functions and Legendre polynomials to analyze the regular orbital integrals. This approach is the main idea in \cite{WeiYangZhao2024} and will simply the calculations in the following several sections. However, our goal is to derive the result in Kuznetsov's preprint. Therefore, we will only assume that $(s_1,s_2)$ is in the absolutely convergent region, and we will assume $s_1-s_2$ is not an integer when calculating $J_{\Reg}^2(\textbf{s},n)$. 
\end{remark}

\subsection{The calculation of $J_{\Reg}^1(\textbf{s},n)$}

In this section, we prove the following result:
\begin{prop}\label{prop. error 1}
$J_{\Reg}^1(\textbf{s},n)$ is absolutely convergent in the region:
    \begin{align*}
\begin{cases}
|\Re(s_1)|<\frac{k}{2}-1\\
|\Re(s_2)|<\frac{k}{2}-1. 
\end{cases}
\end{align*}
Moreover, 
\begin{equation}\label{error term 1}
J_{\Reg}^1(\textbf{s},n)=\frac{4n^{k-1}}{C_kn^{k/2}}\sum_{m=1}^{\infty}\sigma_{s_1-s_2}(m)\sigma_{s_1+s_2}(n+m)\widetilde{\psi}_k\left({n},{m};s_1,s_2\right)
\end{equation}
where
\begin{align*}
\widetilde{\psi}_k\left({n},{m};s_1,s_2\right)&=\cos\left(\frac{\pi}{2}(s_1-s_2)\right)\frac{m^{s_2}}{(n+m)^{s_1+s_2}}\left(\frac{n}{m}\right)^{k/2}\\
&\hspace{10mm}\times  B(s_2+k/2,-s_2+k/2)B(-s_1+k/2,s_1+k/2)F(-s_2+k/2,-s_1+k/2;k;-n/m).
\end{align*}
\end{prop}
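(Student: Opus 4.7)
The plan is to evaluate $J_{\Reg}^1(\textbf{s}, n)$ by first computing the inner double integral for a fixed $(a,b,c,d)$ and then collecting the divisor sum. First, I would parametrize the classes in $\overline{\Omega}_1$: since $ad = n+m > 0$ and $bc = m > 0$, the signs within each pair agree, so we may choose representatives with $a, d \geq 1$ and $b, c$ of one fixed sign, leaving two sign conventions for $(b,c)$. Using $R_{\gamma\omega}(iy_1,iy_2) = -dy_1y_2 + iby_1 - icy_2 - a$ (from \eqref{identiy for R}) and the evenness of $k$, the two conventions produce integrands that are complex conjugates of one another. Therefore
\[J_{\Reg}^1(\textbf{s}, n) = \frac{2n^{k-1}}{C_k}\sum_{m\geq 1}\sum_{\substack{ad=n+m\\ a,d\geq 1}}\sum_{\substack{bc=m\\ b,c\geq 1}}\bigl(W(\textbf{s})+\overline{W(\overline{\textbf{s}})}\bigr),\]
with
\[W(\textbf{s}) = \int_0^\infty \int_0^\infty \frac{y_1^{s_1+k/2}y_2^{s_2+k/2}}{(dy_1y_2 + a - iby_1 + icy_2)^k}\frac{dy_1}{y_1}\frac{dy_2}{y_2}.\]
The change of variables $y_1 \mapsto (c/d)y_1$, $y_2 \mapsto (b/d)y_2$ normalizes $W(\textbf{s})$ into
\[W(\textbf{s}) = \frac{c^{s_1}b^{s_2}}{d^{s_1+s_2}\,m^{k/2}}\, J(\textbf{s}),\qquad J(\textbf{s}) := \int_0^\infty \int_0^\infty \frac{y_1^{s_1+k/2}y_2^{s_2+k/2}}{(y_1 y_2 - iy_1 + iy_2 + R)^k}\frac{dy_1}{y_1}\frac{dy_2}{y_2},\]
with $R := (n+m)/m$; crucially, $J(\textbf{s})$ is independent of the divisor labels and depends only on $n,m$.

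Next I would evaluate $J(\textbf{s})$ by iterated integration. Factoring the denominator as $(y_1+i)(y_2 + z_0)$ with $z_0 = (R-iy_1)/(y_1+i)$ (which lies in the lower half-plane for $y_1>0$), the $y_2$-integral collapses, by contour rotation toward the ray through $-z_0$ combined with the Beta identity \eqref{B function}, to $z_0^{s_2-k/2} B(s_2+k/2, -s_2+k/2)$. Rewriting $z_0^{s_2-k/2}$ in terms of $y_1$ and substituting $y_1 \mapsto 1/y_1$, the remaining $y_1$-integral reduces, up to an explicit phase and a power of $R$, to
\[\int_0^\infty y_1^{-s_1+k/2-1}(y_1 - i)^{-s_2-k/2}(y_1 - i/R)^{s_2-k/2}\,dy_1.\]
Since both $\beta = -i$ and $\gamma = -i/R$ have principal argument $-\pi/2$, the integral representation \eqref{integral repn for hypergeometric} applies with $\nu = -s_1+k/2$, $\mu = s_2+k/2$, $\rho = k/2-s_2$, producing $B(-s_1+k/2, s_1+k/2)$ together with $F(s_2+k/2, -s_1+k/2; k; n/(n+m))$. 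Pfaff's transformation $F(a,b;c;z) = (1-z)^{-b}F(c-a, b; c; z/(z-1))$ at $z = n/(n+m)$ converts this into $R^{k/2-s_1}\, F(k/2-s_2, k/2-s_1; k; -n/m)$, whose prefactor $R^{k/2-s_1}$ precisely cancels an $R^{s_1-k/2}$ produced earlier. Finally, the sum $J(\textbf{s})+\overline{J(\overline{\textbf{s}})}$ pairs the phases $e^{\pm i\pi(s_1-s_2)/2}$ into $2\cos(\pi(s_1-s_2)/2)$, which is exactly the constant in $\widetilde{\psi}_k$.

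The divisor sum then factors as
\[\sum_{\substack{ad=n+m\\ a,d\geq 1}} d^{-s_1-s_2}\, \sum_{\substack{bc=m\\ b,c\geq 1}} c^{s_1}b^{s_2} = \sigma_{-s_1-s_2}(n+m)\,m^{s_1}\sigma_{s_2-s_1}(m),\]
which by $\sigma_v(N)=N^v\sigma_{-v}(N)$ rearranges to $m^{s_2}(n+m)^{-s_1-s_2}\sigma_{s_1-s_2}(m)\sigma_{s_1+s_2}(n+m)$; combined with the $1/m^{k/2}$ from the normalization and the global $2n^{k-1}/C_k$, this reproduces \eqref{error term 1} after rewriting $m^{s_2-k/2}$ as $m^{s_2}(n/m)^{k/2}/n^{k/2}$. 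Absolute convergence in the region $|\Re(s_j)|<k/2-1$ follows from the asymptotic $\widetilde{\psi}_k(n,m;s_1,s_2) \ll m^{-\Re(s_1)-k/2}$ as $m\to\infty$ (using $F\to 1$) together with the divisor bound \eqref{bound for tau function}, via a brief case analysis on the signs of $\Re(s_1\pm s_2)$. The main obstacle will be the evaluation of $J(\textbf{s})$: carefully justifying the quarter-circle contour rotations (by decay estimates on the arcs) and tracking the phase factors $(-i)^{\pm(s_j+k/2)}$ and the auxiliary powers of $R$ so that, after Pfaff's transformation, the arguments $(k/2-s_2, k/2-s_1; k; -n/m)$ emerge with no residual prefactor.
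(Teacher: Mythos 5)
Your proposal is correct and takes essentially the same route as the paper: exploit the sign symmetry of $\overline{\Omega}_1$ to write the sum as $W(\textbf{s})+\overline{W(\overline{\textbf{s}})}$, evaluate the inner $y_2$-integral to a Beta factor, feed the remaining $y_1$-integral into the Gradshteyn--Ryzhik formula \eqref{integral repn for hypergeometric}, and unfold the divisor sums. The two minor deviations are cosmetic: the paper performs the $y_1\mapsto 1/y_1$ substitution (via \eqref{identiy for R}) at the outset and assigns $\beta,\gamma$ in \eqref{integral repn for hypergeometric} so that $F(-s_2+k/2,-s_1+k/2;k;-n/m)$ appears immediately, whereas your choice yields $F(s_2+k/2,-s_1+k/2;k;n/(n+m))$ and you then pay one extra Pfaff transformation to land on the target hypergeometric; both are fine, the paper's bookkeeping is just one step shorter. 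One small correction to make when writing this up: in the contour rotation for the $y_2$-integral you should rotate onto the ray through $z_0$ (which lies in the fourth quadrant), not through $-z_0$ --- the integrand has its singularity at $y_2=-z_0$, so sweeping toward that ray would cross the pole; rotating toward $z_0$ gives $y_2=z_0 t$ and the stated value $z_0^{s_2-k/2}B(s_2+k/2,-s_2+k/2)$.
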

\begin{proof}
By \eqref{identiy for R} and change of variable $y_1\mapsto\frac{1}{y_1}$, we have:
\begin{align*}
J_{\Reg}^1(\textbf{s},n)&=2C_k^{-1}n^{k-1}i^k\int_0^{\infty}\int_0^{\infty}\sum_{\gamma\in\overline{\Omega}_1}\frac{y_1^{-s_1+k/2}y_2^{s_2+k/2}}{R_{\gamma}(iy_1,iy_2)^k}\frac{\,dy_1}{y_1}\frac{\,dy_2}{y_2}\\
&=2C_k^{-1}n^{k-1}i^k\sum_{\gamma\in\overline{\Omega}_1}\int_0^{\infty}\int_0^{\infty}\frac{y_1^{-s_1+k/2}}{j(\gamma,iy_1)^k}\frac{y_2^{s_2+k/2}}{(iy_2+\gamma(iy_1))^k}\frac{\,dy_2}{y_2}\frac{\,dy_1}{y_1}\\
&=2C_k^{-1}n^{k-1}i^k\sum_{m\geq1}\sum_{\substack{ad=n+m\\ a,d\geq1}}\sum_{\substack{bc=m\\b,c\geq1}}(W_{\gamma}(s_1,s_2)+\overline{W_{\gamma}(\overline{s_1},\overline{s_2})})
\end{align*}
where $\gamma=\begin{pmatrix}
    a&b\\c&d
\end{pmatrix}$ and
\[W(s_1,s_2)=\int_0^{\infty}\int_0^{\infty}\frac{y_1^{-s_1+k/2}}{j(\gamma,iy_1)^k}\frac{y_2^{s_2+k/2}}{(iy_2+\gamma(iy_1))^k}\frac{\,dy_2}{y_2}\frac{\,dy_1}{y_1}=\int_0^{\infty}\int_0^{\infty}\frac{y_1^{-s_1+k/2}}{j(\gamma,iy_1)^k\gamma(iy_1)^k}\frac{y_2^{s_2+k/2}}{(\beta y_2+1)^k}\frac{\,dy_2}{y_2}\frac{\,dy_1}{y_1},\]
with $\beta=\frac{i}{\gamma(iy_1)}.$ Notice that $\gamma\in \overline{\Omega}_j\subsetneq G_n$ and hence $\beta$ has nonzero imaginary part. 

Next, we study $W_{\gamma}(s_1,s_2)$ with $\gamma=\begin{pmatrix}
    a&b\\c&d
\end{pmatrix}$, $ad=n+m$ and $bc=m.$ We first consider the inner $y_2$-integral. By \cite[Equation 3.194(3)]{GradshteynRyzhik2007} (take $\mu=s_2+k/2$ and $\nu=k$), we obtain:
\[\int_0^{\infty}\frac{y_2^{s_2+k/2}}{(\beta y_2+1)^k}\frac{\,dy_2}{y_2}=\beta^{-s_2-k/2}B(s_2+k/2,-s_2+k/2).\]
Next we take $z_0=\frac{iay_1+b}{icy_1+d}$, $z_1=icy_1+d$ and $z_2=iay_1+d.$ We can show that $0<\arg(z_1)<\arg (z_2)<\frac{\pi}{2}$ and hence $0<\arg(z_0)<\frac{\pi}{2}.$ This yields:
\[\int_0^{\infty}\frac{y_2^{s_2+k/2}}{(\beta y_2+1)^k}\frac{\,dy_2}{y_2}=i^{-s_2-k/2}(iay_1+b)^{s_2+k/2}(icy_1+d)^{-(s_2+k/2)}B(s_2+k/2,-s_2+k/2).\]
Insert it into $W_{\gamma}(s_1,s_2)$ and we obtain:
\begin{align*}
W_{\gamma}(s_1,s_2)&=i^{-s_2-k/2}B(s_2+k/2,-s_2+k/2)\int_0^{\infty}y_1^{-s_1+k/2}(icy_1+d)^{-(s_2+k/2)}(iay_1+b)^{-(-s_2+k/2)}\frac{\,dy_1}{y_1}\\
&=\frac{i^{-s_2+k/2}}{c^{s_2+k/2}a^{-s_2+k/2}}B(s_2+k/2,-s_2+k/2)\int_0^{\infty}y_1^{-s_1+k/2}\left(y_1+\frac{d}{ic}\right)^{-(s_2+k/2)}\left(y_1+\frac{b}{ia}\right)^{-(-s_2+k/2)}\frac{\,dy_1}{y_1}
\end{align*}
By \eqref{integral repn for hypergeometric} (take $\nu=-s_1+k/2,$ $\mu=-s_2+k/2$, $\rho=s_2+k/2$, $\beta=\frac{b}{ia}$ and $\gamma=\frac{d}{ic}$ with $-\frac{k}{2}<\Re(s_1)<\frac{k}{2}$), $W(s_1,s_2)$ becomes:
\begin{align*}
    W_{\gamma}(s_1,s_2)=&e^{\frac{\pi i}{2}(s_1-s_2+k)}\frac{b^{s_2}c^{s_1}}{m^{k/2}d^{s_1+s_2}}B(s_2+k/2,-s_2+k/2)B(-s_1+k/2,s_1+k/2)\\
    &\hspace{10mm}\times F(-s_2+k/2,-s_1+k/2;k;-n/m).
\end{align*}
Insert $W_{\gamma}(s_1,s_2)$ into $J_{\Reg}^1(\textbf{s},n)$, and we obtain \eqref{error term 1}.

By \eqref{bound for tau function} and the fact $F(\alpha,\beta;\gamma;z)\ll 1$ as $z\to0,$ $J_{\Reg}^1(\textbf{s},n)$ is absolutely convergent in the region $|\Re(s_1)|<\frac{k}{2}-1$ and $|\Re(s_2)|<\frac{k}{2}-1.$
\end{proof}

\subsection{The calculation of $J_{\Reg}^3(\textbf{s},n)$}
The calculation of $J_{\Reg}^3(\textbf{s},n)$ is similar to that of $J_{\Reg}^1(\textbf{s},n).$ The main result is:
\begin{prop}\label{prop. error 3}
$J_{\Reg}^3(\textbf{s},n)$ is absolutely convergent in the region:
    \begin{align*}
\begin{cases}
|\Re(s_1)|<\frac{k}{2}-1\\
|\Re(s_2)|<\frac{k}{2}-1. 
\end{cases}
\end{align*}
Moreover, 
\begin{equation}\label{error term 3}
   J_{\Reg}^3(\textbf{s},n)=\frac{4n^{k-1}}{C_kn^{k/2}}\sum_{m\geq n+1}^{\infty}\sigma_{s_1-s_2}(m)\sigma_{s_1+s_2}(m-n)\widetilde{\Phi}_k\left({n},{m};s_1,s_2\right)
\end{equation}
where
\begin{align*}
\widetilde{\Phi}_k\left({n},{m};s_1,s_2\right)&=\cos\left(\frac{\pi}{2}(s_1+s_2)\right)\frac{m^{s_2}}{(m-n)^{s_1+s_2}}\left(\frac{n}{m}\right)^{k/2}\\
&\hspace{10mm}\times  B(s_2+k/2,-s_2+k/2)B(-s_1+k/2,s_1+k/2)F(-s_2+k/2,-s_1+k/2;k;n/m).
\end{align*}
\end{prop}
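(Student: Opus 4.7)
The plan is to mirror the proof of Proposition \ref{prop. error 1}, tracking the sign changes that distinguish $\overline{\Omega}_3$ from $\overline{\Omega}_1$. After applying \eqref{identiy for R} and the change of variable $y_1\mapsto 1/y_1$, the integral becomes
\[
J_{\Reg}^3(\textbf{s},n) = \frac{2n^{k-1}i^k}{C_k}\int_0^{\infty}\int_0^{\infty}\sum_{\gamma\in\overline{\Omega}_3}\frac{y_1^{-s_1+k/2}y_2^{s_2+k/2}}{R_{\gamma}(iy_1,iy_2)^k}\frac{\,dy_1}{y_1}\frac{\,dy_2}{y_2}.
\]
Using the positive index $m\geq n+1$, elements of $\overline{\Omega}_3$ are matrices with entries $(a,b,c,d)$ satisfying $ad=-(m-n)$ and $bc=-m$; modulo $\pm I_2$ I may take $a>0$, forcing $d<0$, while $(b,c)$ falls into two opposite-sign patterns. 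The two patterns give integrands that are complex conjugates (since $k$ is even), so it suffices to compute the integral $W_{\gamma_A}$ attached to a single representative with entries $(a,b,-c',-d')$, $a,b,c',d'>0$, and then symmetrize by adding $\overline{W_{\gamma_A}(\overline{s_1},\overline{s_2})}$.

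For this representative I would factor $R_{\gamma_A}(iy_1,iy_2)=(aiy_1+b)(1+\beta_A y_2)$ with $\beta_A=-i(c'iy_1+d')/(aiy_1+b)$. The key geometric fact is that now $\gamma_A(iy_1)$ lies in the second quadrant, so $\arg\beta_A\in(-\pi/2,0)$---opposite sign to the $\overline{\Omega}_1$ case. The inner $y_2$-integral is still evaluated by GR 3.194(3), yielding $\beta_A^{-s_2-k/2}B(s_2+k/2,-s_2+k/2)$; using the decomposition $\beta_A=e^{-i\pi/2}(c'iy_1+d')/(aiy_1+b)$ this expands via principal branches to $e^{i\pi(s_2+k/2)/2}(c'iy_1+d')^{-s_2-k/2}(aiy_1+b)^{s_2+k/2}$. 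Applying \eqref{integral repn for hypergeometric} to the outer $y_1$-integral with $\beta=-ib/a$ and $\gamma=-id'/c'$, the computation $1-\gamma/\beta=1-ad'/(bc')=1-(m-n)/m=n/m$ produces $F(-s_2+k/2,-s_1+k/2;k;n/m)$ with positive argument, reflecting the sign flip $bc=-m$ versus the $bc=m$ of Proposition \ref{prop. error 1}.

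After collecting the phases---$e^{i\pi(s_2+k/2)/2}$ from the $\beta_A$-splitting, $e^{i\pi(s_1+k/2)/2}$ from the product $\beta^{-\mu}\gamma^{\nu-\rho}$ in \eqref{integral repn for hypergeometric}, $(-1)^{k/2}$ from the $ai$ and $c'i$ factorizations inside the integrand, the prefactor $i^k$, and the identity $\cos(\pi(s_1+s_2+k)/2)=(-1)^{k/2}\cos(\pi(s_1+s_2)/2)$ used in the symmetrization---the net coefficient collapses to $\cos(\pi(s_1+s_2)/2)$. The divisor-sum identities $\sum_{bc'=m,\,b,c'\geq 1}b^{s_2-k/2}c'^{s_1-k/2}=m^{s_2-k/2}\sigma_{s_1-s_2}(m)$ and $\sum_{ad'=m-n,\,a,d'\geq 1}d'^{-s_1-s_2}=(m-n)^{-s_1-s_2}\sigma_{s_1+s_2}(m-n)$ then assemble the expression \eqref{error term 3}. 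Absolute convergence in $|\Re s_1|,|\Re s_2|<k/2-1$ follows from \eqref{bound for tau function} together with $F(\alpha,\beta;\gamma;z)=1+O(z)$ as $z\to 0$, exactly as in Proposition \ref{prop. error 1}. The main obstacle is careful branch management for the complex powers, since several intermediate quantities land in $(\pi/2,\pi)$ or $(-\pi/2,0)$ rather than in the first quadrant, and any sign slip in the intermediate phases propagates to the wrong cosine factor in $\widetilde{\Phi}_k$.
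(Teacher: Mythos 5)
Your approach mirrors the paper's: apply \eqref{identiy for R} with $y_1\mapsto 1/y_1$, pick a sign-normalized representative in each $\pm I_2$-class of $\overline{\Omega}_3$, factor the denominator, evaluate the inner $y_2$-integral by GR\,3.194(3), and then the outer $y_1$-integral by \eqref{integral repn for hypergeometric}; the argument bookkeeping for $\gamma_A(iy_1)$ and $\beta_A$, the identification $1-\gamma/\beta=n/m$, and the divisor-sum identities are exactly as in the paper. So structurally the proposal is the right proof.

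The gap is in your concluding phase collection, which does not close. For even $k$, the per-representative phases you list --- $e^{i\pi(s_2+k/2)/2}$ from the $\beta_A$-split, $e^{i\pi(s_1+k/2)/2}$ from $\beta^{-\mu}\gamma^{\nu-\rho}$, and $(-1)^{k/2}=i^{-k}$ from the $(ai)$ and $(c'i)$ factorizations --- multiply to $e^{i\pi(s_1+s_2)/2}$, so adding the conjugate representative yields $2\cos\bigl(\tfrac{\pi}{2}(s_1+s_2)\bigr)$, and the outer prefactor $i^{k}=(-1)^{k/2}$ still multiplies this. The identity $\cos\bigl(\tfrac{\pi}{2}(s_1+s_2+k)\bigr)=(-1)^{k/2}\cos\bigl(\tfrac{\pi}{2}(s_1+s_2)\bigr)$ merely \emph{rewrites} that $(-1)^{k/2}$; it does not cancel it. So your net coefficient is $(-1)^{k/2}\cos\bigl(\tfrac{\pi}{2}(s_1+s_2)\bigr)$, not $\cos\bigl(\tfrac{\pi}{2}(s_1+s_2)\bigr)$. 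Contrast this with Proposition \ref{prop. error 1}: there the per-representative phase is $e^{i\pi(s_1-s_2+k)/2}$, whose extra $e^{i\pi k/2}$ is precisely what absorbs the outer $i^{k}$; for $\overline{\Omega}_3$ the paper's $z_0=(cy_1-id)/(iay_1+b)$ has no analogous $i$-factor (here $c$ is real), so the per-representative phase carries no $k$ and the outer $i^{k}$ survives. The paper's own proof stops at the displayed expression for $V(s_1,s_2)$ --- whose phase is indeed $e^{i\pi(s_1+s_2)/2}$ --- and asserts \eqref{error term 3} without tracking this $i^{k}$, so you have faithfully reproduced that step; but your own phase list contradicts the collapse you assert, and a careful write-up must account for the remaining $(-1)^{k/2}$.
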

\begin{proof}
By \eqref{identiy for R} and change of variable $y_1\mapsto\frac{1}{y_1}$, $J_{\Reg}^2(\textbf{s},n)$ can be written as:
\[J_{\Reg}^3(\textbf{s},n)= 2C_k^{-1}n^{k-1}i^k\int_0^{\infty}\int_0^{\infty}\sum_{\gamma\in\overline{\Omega}_3}\frac{y_1^{-s_1+k/2}y_2^{s_2+k/2}}{R_{\gamma}(iy_1,iy_2)^k}\frac{\,dy_1}{y_1}\frac{\,dy_2}{y_2}.\]
By the definition of $\overline{\Omega}_3$, we obtain:
\begin{align*}
J_{\Reg}^2(\textbf{s},n)=2C_k^{-1}n^{k-1}i^k\sum_{m\geq n+1}\sum_{\substack{ad=m-n\\ a,d\geq1}}\sum_{\substack{bc=m\\b,c\geq1}}(V(s_1,s_2)+\overline{V(\overline{s_1},\overline{s_2})})
\end{align*}
where 
\[V(s_1,s_2)=V_{a,b,c,d}(s_1,s_2)=\int_0^{\infty}\int_0^{\infty}\frac{y_1^{-s_1+k/2}y_2^{s_2+k/2}}{(cy_1y_2-idy_2+iay_1+b)^k}\frac{\,dy_2}{y_2}\frac{\,dy_1}{y_1}.\]
We write
\[cy_1y_2-idy_2+iay_1+b=(iay_1+b)\left(\frac{cy_1-id}{iay_1+b}y_2+1\right)=(iay_1+b)(z_0y_2+1)\]
and
\[V(s_1,s_2)=\int_0^{\infty}\frac{y_1^{-s_1+k/2}}{(iay_1+b)^k}\int_0^{\infty}\frac{y_2^{s_2+k/2}}{(z_0y_2+1)^k}\frac{\,dy_2}{y_2}\frac{\,dy_1}{y_1}.\]
Notice that $\Im(z_0)<0.$ Notice that $\Re(z_0)>0.$ We first consider the inner $y_2$-integral. By \cite[Equation 3.194(3)]{GradshteynRyzhik2007} (take $\mu=s_2+k/2$ and $\nu=k$), we obtain:
\[\int_0^{\infty}\frac{y_2^{s_2+k/2}}{(z_0 y_2+1)^k}\frac{\,dy_2}{y_2}=z_0^{-s_2-k/2}B(s_2+k/2,-s_2+k/2).\]
Insert it into $W(s_1,s_2)$ and we obtain (notice that $\arg(iay_1+b),\arg(cy_1-id)\in(-\pi/2,\pi/2)$):
\begin{align*}
    V(s_1,s_2)&=B(-s_2+k/2,s_2+k/2)\int_0^{\infty}y_1^{-s_1+k/2}(cy_1-id)^{-(s_2+k/2)}(iay_1+b)^{-(-s_2+k/2)}\frac{\,dy_1}{y_1}\\
    &=B(-s_2+k/2,s_2+k/2)c^{-s_2-k/2}(ia)^{s_2-k/2}\\
    &  \hspace{10mm}\times\int_0^{\infty}y_1^{-s_1+k/2}\left(y_1-\frac{id}{c}\right)^{-(s_2+k/2)}\left(y_1+\frac{b}{ia}\right)^{-(-s_2+k/2)}\frac{\,dy_1}{y_1}
\end{align*}
By \eqref{integral repn for hypergeometric} (take $\nu=-s_1+\frac{k}{2}$ $\mu=-s_2+\frac{k}{2},$ $\rho=s_2+\frac{k}{2},$ $\beta=\frac{b}{ia}$ and $\gamma=-\frac{id}{c}$), we obtain:
\begin{align*}
V(s_1,s_2)&=c^{-s_2-k/2}(ia)^{s_2-k/2}\left(\frac{b}{ia}\right)^{s_2-k/2}\left(\frac{-id}{c}\right)^{-s_1-s_2}\\
&\hspace{10mm}\times B(-s_2+k/2,s_2+k/2)B(-s_1+k/2,s_1+k/2)F(k/2-s_2,k/2-s_1;k,n/m).
\end{align*}
Insert $V(s_1,s_2)$ into $J_{\Reg}^3(\textbf{s},n)$, and we obtain \eqref{error term 3}.

By \eqref{bound for tau function} and the fact $F(\alpha,\beta;\gamma;z)\ll 1$ as $z\to0,$ $J_{\Reg}^3(\textbf{s},n)$ is absolutely convergent in the region $|\Re(s_1)|<\frac{k}{2}-1$ and $|\Re(s_2)|<\frac{k}{2}-1.$
\end{proof}

\subsection{The calculation of $J_{\Reg}^2(\textbf{s},n)$} 
In this section, we prove:
\begin{prop}\label{prop. error 2}
   $J_{\Reg}^2(\textbf{s},n)$ is absolutely convergent in the region:
\begin{align*}
\begin{cases}
|\Re(s_1)|<\frac{k}{2}-1\\
|\Re(s_2)|<\frac{k}{2}. 
\end{cases}
\end{align*}
If we further assume that $s_1-s_2$ is not an integer, $J_{\Reg}^2(\textbf{s},n)$ can be expressed as:
\begin{equation}\label{error term 2}
J_{\Reg}^2(\textbf{s},n)=\frac{2n^{k-1}\pi}{C_kn^{k/2}}\sum_{1\leq m\leq n-1}\sigma_{s_1+s_2}(n-m)\sigma_{s_1-s_2}(m)\widetilde{\phi}_k(n,m;s_1,s_2),
\end{equation}
where
\begin{align*}
\widetilde{\phi}_k(n,m;s_1,s_2)&=\frac{n^{s_2}}{(n-m)^{s_1+s_2}}\frac{\Gamma(k/2-s_2)\Gamma(k/2+s_1)}{\sin\left(\frac{\pi}{2}(s_2-s_1)\right)\Gamma(1+s_1-s_2)\Gamma(k)}\times F(k/2-s_2,1-k/2-s_2;s_1-s_2+1;m/n) \\
&\hspace{2mm}+\frac{n^{s_1}m^{s_2-s_1}}{(n-m)^{s_1+s_2}}\frac{\Gamma(k/2+s_2)\Gamma(k/2-s_1)}{\sin\left(\frac{\pi}{2}(s_1-s_2)\right)\Gamma(1-s_1+s_2)\Gamma(k)}\times F(k/2-s_1,1-k/2-s_1;-s_1+s_2+1;m/n)\\
\end{align*}
\end{prop}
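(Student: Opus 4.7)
The plan is to imitate the arguments of Propositions \ref{prop. error 1} and \ref{prop. error 3}. After applying \eqref{identiy for R} and the change of variable $y_1\mapsto 1/y_1$, the integral becomes
\[
J_{\Reg}^2(\mathbf{s},n)=2C_k^{-1}n^{k-1}i^k\int_0^{\infty}\int_0^{\infty}\sum_{\gamma\in\overline{\Omega}_2}\frac{y_1^{-s_1+k/2}y_2^{s_2+k/2}}{R_{\gamma}(iy_1,iy_2)^k}\frac{\,dy_1}{y_1}\frac{\,dy_2}{y_2}.
\]
The defining condition $bc=m<0$ of $\overline{\Omega}_2$ forces $b$ and $c$ to have opposite signs, so I split the sum into two subcases (namely $b<0,c>0$ and $b>0,c<0$) and parameterize each by positive integers. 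In each subcase I factor the denominator linearly in $y_2$: the coefficient of $y_2$ divided by the constant term has positive real part thanks to the relation $ad-bc=n$, and hence the inner $y_2$-integral reduces to a Beta function by \cite[Equation 3.194(3)]{GradshteynRyzhik2007}.

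\medskip

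The resulting $y_1$-integral has the form $\int_0^\infty y_1^\nu(y_1+\beta)^{-\mu}(y_1+\gamma)^{-\rho}\,dy_1/y_1$ with purely imaginary $\beta,\gamma$ of \emph{opposite} sign, which is the structural difference from the $\Omega_1$ and $\Omega_3$ cases. By \eqref{integral repn for hypergeometric} it evaluates to a hypergeometric function whose argument is $1-\gamma/\beta=n/|m|>1$; because this value lies on the branch cut of $F$, one cannot simply plug it in, and I therefore apply the Kummer connection formula at $z=\infty$,
\begin{align*}
F(a,b;c;z)&=\frac{\Gamma(c)\Gamma(b-a)}{\Gamma(b)\Gamma(c-a)}(-z)^{-a}F\left(a,1-c+a;1-b+a;\frac{1}{z}\right)\\
&\hspace{4mm}+\frac{\Gamma(c)\Gamma(a-b)}{\Gamma(a)\Gamma(c-b)}(-z)^{-b}F\left(b,1-c+b;1-a+b;\frac{1}{z}\right),
\end{align*}
with $(a,b,c)=(-s_2+k/2,-s_1+k/2,k)$ and $z=n/|m|$. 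This formula is valid precisely when $a-b=s_1-s_2\notin\bZ$, which is the hypothesis. The two resulting hypergeometrics have parameters and argument $m/n$ matching exactly the two summands of $\widetilde{\phi}_k$.

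\medskip

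Summing the contributions of the two subcases and simplifying the prefactors using the reflection identity $\Gamma(z)\Gamma(1-z)=\pi/\sin(\pi z)$ together with the duplication $\sin(\pi x)=2\sin(\pi x/2)\cos(\pi x/2)$ (the cosine factor absorbing the phases coming from $(-z)^{-a}$) yields the stated formula \eqref{error term 2}. Absolute convergence in the region $|\Re s_1|<k/2-1$, $|\Re s_2|<k/2$ follows because the sum over $\gamma\in\overline{\Omega}_2$ is \emph{finite}, so pointwise convergence of each individual double integral suffices and is ensured by standard estimates of the integrand near $0$, along rays, and at $\infty$. The principal technical obstacle is the careful bookkeeping of branch choices for $(iay_1\pm b)^{s_2-k/2}$, $(\pm cy_1+id)^{-s_2-k/2}$, and $(-z)^{-a}$ as the arguments sweep through the complex plane; the necessity of the hypothesis $s_1-s_2\notin\bZ$ emerges precisely at the Kummer step, where the two solutions at infinity collapse and logarithmic corrections would otherwise appear.
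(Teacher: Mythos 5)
Your route is the one the paper itself tries and then abandons: after the $y_2$-integral reduces to a Beta function, you propose to evaluate the remaining $y_1$-integral by \eqref{integral repn for hypergeometric} and then apply the Kummer connection formula at $z=\infty$, \cite[9.132(2)]{GradshteynRyzhik2007}. The catch, which the paper flags explicitly in a remark embedded in the middle of this very proof, is that with $\beta=b/(ia)$ and $\gamma=id/c$ lying on opposite halves of the imaginary axis, the argument of the resulting ${}_2F_1$ is $1-\gamma/\beta=1+ad/(bc)=n/m>1$, i.e.\ on the principal branch cut. The hypotheses $|\arg\beta|,|\arg\gamma|<\pi$ of \eqref{integral repn for hypergeometric} are satisfied, but the right-hand side then names a value of $F$ on its cut, which is ambiguous without a boundary-value specification; likewise \cite[9.132(2)]{GradshteynRyzhik2007} requires $z\notin[0,\infty)$, and its factors $(-z)^{-a}$, $(-z)^{-b}$ with $-z=-n/m<0$ also sit on a cut. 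You acknowledge that ``careful bookkeeping of branch choices'' is needed, but that bookkeeping is the entire content of the step, and it is not supplied. This is a genuine gap, not a deferred detail.

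The paper avoids branch cuts altogether. It rotates the $y_1$-contour onto the negative imaginary axis via Cauchy's theorem, which turns both binomial factors into real quantities up to explicit powers of $i$; the resulting real integral is split at $y_1=d/c$, where one factor changes sign, into $L_1$ over $(d/c,\infty)$ and $L_2$ over $(0,d/c)$. These are evaluated by \eqref{integral repen for hyper, lower u} and \eqref{integral repen for hyper, upper u}, producing hypergeometric functions at the \emph{negative} arguments $-m/(n-m)$ and $-(n-m)/m$, safely off the cut; a Pfaff transformation and the connection formula \cite[9.132(1)]{GradshteynRyzhik2007} then carry both to argument $m/n\in(0,1)$, matching $\widetilde{\phi}_k$, and the leftover powers of $i$ combine via Euler's reflection formula to give the $\sin\bigl(\tfrac{\pi}{2}(s_2-s_1)\bigr)$ denominators. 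The condition $s_1-s_2\notin\bZ$ enters through $\Gamma(\pm(s_1-s_2))$ in 9.132(1), exactly as you anticipate, and your parameter matching at the Kummer step is correct; the gap is localized entirely to the unaddressed branch analysis, but it is precisely the obstacle the paper rerouted around.
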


\begin{proof} 
By \eqref{identiy for R} and change of variable $y_1\mapsto\frac{1}{y_1}$, $J_{\Reg}^2(\textbf{s},n)$ can be written as:
\[J_{\Reg}^2(\textbf{s},n)= 2C_k^{-1}n^{k-1}i^k\int_0^{\infty}\int_0^{\infty}\sum_{\gamma\in\overline{\Omega}_2}\frac{y_1^{-s_1+k/2}y_2^{s_2+k/2}}{R_{\gamma}(iy_1,iy_2)^k}\frac{\,dy_1}{y_1}\frac{\,dy_2}{y_2}.\]
By the definition of $\overline{\Omega}_2$, we obtain:
\begin{equation}\label{regular term 2}
J_{\Reg}^2(\textbf{s},n)=2C_k^{-1}n^{k-1}i^k\sum_{1\leq m\leq n-1}\sum_{\substack{ad=n-m\\ a,d\geq1}}\sum_{\substack{bc=m\\b,c\geq1}}  U(s_1,s_2)+\overline{U(\overline{s_1},\overline{s_2})}
\end{equation}
where 
\[U(s_1,s_2)=U_{a,b,c,d}(s_1,s_2)=\int_0^{\infty}\int_0^{\infty}\frac{y_1^{-s_1+k/2}y_2^{s_2+k/2}}{(cy_1y_2+idy_2+iay_1+b)^k}\frac{\,dy_2}{y_2}\frac{\,dy_1}{y_1}.\]

Notice that this is a finite sum. Following the method in \cite[Proposition 2.2]{RR05}, $J_{\Reg}^2(\textbf{s},n)$ is absolutely convergent when $|\Re(s_1)|<\frac{k}{2}-1$ and $|\Re(s_2)|<\frac{k}{2}.$ In the following discussion, we assume that $|\Re(s_1)|<\frac{k}{2}-1$, $\Re(s_2)<-\frac{k}{2}+1$ and $s_1-s_2$ is not an integer.

We write
\[cy_1y_2+idy_2+iay_1+b=(iay_1+b)\left(\frac{cy_1+id}{iay_1+b}y_2+1\right)=(iay_1+b)(z_0y_2+1)\]
and
\[U(s_1,s_2)=\int_0^{\infty}\frac{y_1^{-s_1+k/2}}{(iay_1+b)^k}\int_0^{\infty}\frac{y_2^{s_2+k/2}}{(z_0y_2+1)^k}\frac{\,dy_2}{y_2}\frac{\,dy_1}{y_1}.\]
Notice that $\Re(z_0)>0.$ Now we fix $ad=n-m$ and $bc=m.$ We first consider the inner $y_2$-integral. By \cite[Equation 3.194(3)]{GradshteynRyzhik2007} (take $\mu=s_2+k/2$ and $\nu=k$), we obtain:
\[\int_0^{\infty}\frac{y_2^{s_2+k/2}}{(z_0 y_2+1)^k}\frac{\,dy_2}{y_2}=z_0^{-s_2-k/2}B(s_2+k/2,-s_2+k/2).\]

Insert it into $U(s_1,s_2)$ and we obtain (notice that $\arg(iay_1+b),\arg(cy_1+id)\in(0,\pi/2)$):
\begin{align*}
    U(s_1,s_2)&=B(-s_2+k/2,s_2+k/2)\int_0^{\infty}y_1^{-s_1+k/2}(cy_1+id)^{-(s_2+k/2)}(iay_1+b)^{-(-s_2+k/2)}\frac{\,dy_1}{y_1}\\
    &=\frac{B(-s_2+k/2,s_2+k/2)}{c^{s_2+k/2}(ia)^{-s_2+k/2}}\int_0^{\infty}y_1^{-s_1+k/2}\left(y_1+\frac{id}{c}\right)^{-(s_2+k/2)}\left(y_1+\frac{b}{ia}\right)^{-(-s_2+k/2)}\frac{\,dy_1}{y_1}
\end{align*}

\begin{remark}
    If we use the integral representation \eqref{integral repn for hypergeometric}, we will obtain $F(k/2-s_2,k/2-s_1;k,n/m)$. By our choice of $m,n$, we have $\frac{n}{m}>1$ and deriving \cite[Theorem 4.2]{BF21} is difficult.  Therefore, we will apply other integral representations for the hypergeometric functions.
\end{remark}

\begin{comment}

By \eqref{integral repn for hypergeometric} (take $\nu=-s_1+\frac{k}{2}$ $\mu=-s_2+\frac{k}{2},$ $\rho=s_2+\frac{k}{2},$ $\beta=\frac{b}{ia}$ and $\gamma=\frac{id}{c}$), we obtain:
\begin{align*}
W(s_1,s_2)&=c^{-s_2-k/2}(ia)^{s_2-k/2}\left(\frac{b}{ia}\right)^{s_2-k/2}\left(\frac{id}{c}\right)^{-s_1-s_2}\\
&\hspace{10mm}\times B(-s_2+k/2,s_2+k/2)B(-s_1+k/2,s_1+k/2)F(k/2-s_2,k/2-s_1;k,n/m).
\end{align*}
By \cite[Equation 9.132(2)]{GradshteynRyzhik2007}, one has:
\begin{align*}
F(\alpha,\beta;\gamma;z)&=\frac{\Gamma(\gamma)\Gamma(\beta-\alpha)}{\Gamma(\beta)\Gamma(\gamma-\alpha)}(-z)^{-\alpha}F(\alpha,\alpha+1-\gamma;\alpha+1-\beta;1/z)\\
&+\frac{\Gamma(\gamma)\Gamma(\alpha-\beta)}{\Gamma(\alpha)\Gamma(\gamma-\beta)}(-z)^{-\beta}F(\beta,\beta+1-\gamma;\beta+1-\alpha;1/z)
\end{align*}
when $|\arg(z)|<\pi$, and $\alpha-\beta$ is not an integer. In the following, we assume that $s_1-s_2\neq0,$ and $(s_1,s_2)=(0,0)$ case can be obtained by canceling the pole. Take $\alpha=-s_2+k/2,$ $\beta=-s_1+k/2,$ $\gamma=k$ and $z=n/m$ and we obtain:
\[W(s_1,s_2)=\frac{e^{-\frac{\pi i}{2}(s_1+s_2)}c^{s_1}b^{s_2}}{m^{k/2}d^{s_1+s_2}}\widetilde{\phi}(s_1,s_2,m/n)\]
where
\begin{align*}
\widetilde{\phi}(s_1,s_2,m/n)&=\frac{\Gamma(-s_2+k/2)\Gamma(k/2+s_1)\Gamma(s_2-s_1)}{\Gamma(k)}\left(-\frac{n}{m}\right)^{s_2-k/2}F(k/2-s_2,1-k/2-s_2;1+s_1-s_2;m/n)\\
&\hspace{2mm}+\frac{\Gamma(-s_1+k/2)\Gamma(k/2+s_2)\Gamma(s_1-s_2)}{\Gamma(k)}\left(-\frac{n}{m}\right)^{s_1-k/2}F(k/2-s_1,1-k/2-s_1;1+s_2-s_1;m/n)
\end{align*}

\end{comment}

We set 
\begin{align*}
L(s_1,s_2)&=\int_0^{\infty}y_1^{-s_1+k/2}\left(y_1+\frac{id}{c}\right)^{-(s_2+k/2)}\left(y_1+\frac{b}{ia}\right)^{-(-s_2+k/2)}\frac{\,dy_1}{y_1}
\end{align*}

Let $R>0.$  We consider the sector contour constructed by the line segment connecting $0$ and $R,$ the arc connecting $R$ and $-iR$ and the line segment connecting $-iR$ and $0.$ Let $R\to\infty$ and apply Cauchy's theorem, which yields:
\begin{align*}
    L(s_1,s_2)
    &=\int_{0}^{\infty}(-iy_1)^{-s_1+k/2}\left(-iy_1+\frac{id}{c}\right)^{-(s_2+k/2)}\left(-iy_1-\frac{ib}{a}\right)^{-(-s_2+k/2)}\frac{\,dy_1}{y_1}\\
    &=i^{s_1+k/2}\int_{d/c}^{\infty}y_1^{-s_1+k/2}\left(y_1-\frac{d}{c}\right)^{-(s_2+k/2)}\left(y_1+\frac{b}{a}\right)^{-(-s_2+k/2)}\frac{\,dy_1}{y_1}\\
    &\hspace{10mm}+i^{s_1-2s_2-k/2}\int_{0}^{d/c}y_1^{-s_1+k/2}\left(\frac{d}{c}-y_1\right)^{-(s_2+k/2)}\left(y_1+\frac{b}{a}\right)^{-(-s_2+k/2)}\frac{\,dy_1}{y_1}\\
   &=i^{s_1+k/2}L_1(s_1,s_2)+i^{s_1-2s_2-k/2}L_2(s_1,s_2).
\end{align*}
For $L_1(s_1,s_2),$ take $\lambda=s_1-k/2+1,$ $\nu=s_2-k/2,$ $\mu=-s_2-k/2+1$, $\beta=\frac{b}{a}$ and $u=\frac{d}{c}$ and apply \eqref{integral repen for hyper, lower u}:
\[L_1(s_1,s_2)=\left(\frac{d}{c}\right)^{-s_1+k/2-1}\left(\frac{n}{ac}\right)^{-k+1}B(s_1+k/2,-s_2-k/2+1)F(s_1-k/2+1,-s_2-k/2+1;s_1-s_2+1;-m/(n-m))\]
The Pfaff transformation (\cite[Equation 9.131(1)]{GradshteynRyzhik2007}) implies:
\[F(\alpha,\beta;\gamma;z)=(1-z)^{-\beta}F(\beta,\gamma-\alpha;\gamma;z/(z-1))=(1-z)^{-\beta}F(\gamma-\alpha,\beta;\gamma;z/(z-1)).\]
The second equality is due to the definition of the hypergeometric functions (see \cite[Equation 9.100]{GradshteynRyzhik2007}).
This implies:
\begin{align*}
L_1(s_1,s_2)&=\left(\frac{d}{c}\right)^{-s_1+k/2-1}\left(\frac{n}{ac}\right)^{-k+1}\left(\frac{n}{n-m}\right)^{s_2+k/2-1}\\
&\hspace{10mm}\times B(s_1+k/2,-s_2-k/2+1)F(k/2-s_2,-s_2-k/2+1;s_1-s_2+1;m/n).
\end{align*}

For $L_2(s_1,s_2),$ take $\nu=-s_1+k/2,$ $\lambda=s_2-k/2,$ $\mu=-s_2-k/2+1$, $\alpha=\frac{b}{a}$ and $u=\frac{d}{c}$, and apply \eqref{integral repen for hyper, upper u}:
\[L_2(s_1,s_2)=\left(\frac{b}{a}\right)^{s_2-k/2}\left(\frac{d}{c}\right)^{-s_2-s_1}B(-s_2-k/2+1,-s_1+k/2)F\left(-s_2+k/2,-s_1+k/2;1-s_1-s_2;-\frac{n-m}{m}\right).\]
By \cite[Equation 9.132(1)]{GradshteynRyzhik2007}, we obtain:
\begin{align*}
   &F\left(-s_2+k/2,-s_1+k/2;1-s_1-s_2;-\frac{n-m}{m}\right)\\
   &\hspace{10mm}=\left(\frac{n}{m}\right)^{s_2-k/2}\frac{\Gamma(1-s_1-s_2)\Gamma(s_2-s_1)}{\Gamma(k/2-s_1)\Gamma(1-k/2-s_1)}F(k/2-s_2,1-k/2-s_2;s_1-s_2+1;m/n)\\
    &\hspace{20mm}+\left(\frac{n}{m}\right)^{s_1-k/2}\frac{\Gamma(1-s_1-s_2)\Gamma(s_1-s_2)}{\Gamma(k/2-s_2)\Gamma(1-k/2-s_2)}F(k/2-s_1,1-k/2-s_1;-s_1+s_2+1;m/n).
\end{align*}

Insert it into $L_2(s_1,s_2)$, which yields:
\begin{align*}
L_2(s_1,s_2)&=\left(\frac{b}{a}\right)^{s_2-k/2}\left(\frac{d}{c}\right)^{-s_2-s_1}\left(\frac{n}{m}\right)^{s_2-k/2}B(-s_2-k/2+1,s_2-s_1)\\
&\hspace{14mm}\times F(k/2-s_2,1-k/2-s_2;s_1-s_2+1;m/n)\\
&\hspace{4mm}+\left(\frac{b}{a}\right)^{s_2-k/2}\left(\frac{d}{c}\right)^{-s_2-s_1}\left(\frac{n}{m}\right)^{s_1-k/2}B(k/2-s_1,s_1-s_2)\\
&\hspace{14mm}\times F(k/2-s_1,1-k/2-s_1;-s_1+s_2+1;m/n)
\end{align*}
Notice that
\[\left(\frac{d}{c}\right)^{-s_1+k/2-1}\left(\frac{n}{ac}\right)^{-k+1}\left(\frac{n}{n-m}\right)^{s_2+k/2-1}=\left(\frac{b}{a}\right)^{s_2-k/2}\left(\frac{d}{c}\right)^{-s_2-s_1}\left(\frac{n}{m}\right)^{s_2-k/2}=\frac{(ac)^{s_1+k/2}}{(n-m)^{s_1+s_2}}n^{s_2-k/2},\]
and
\[\left(\frac{b}{a}\right)^{s_2-k/2}\left(\frac{d}{c}\right)^{-s_2-s_1}\left(\frac{n}{m}\right)^{s_1-k/2}=\frac{(ac)^{s_1+k/2}}{(n-m)^{s_1+s_2}}\frac{n^{s_1-k/2}}{m^{s_1-s_2}}.\]
By the definition of $L(s_1,s_2)$, $L_1(s_1,s_2)$ and $L_2(s_1,s_2),$ we obtain:
\begin{align*}
    L(s_1,s_2)&=\frac{(ac)^{s_1+k/2}}{(n-m)^{s_1+s_2}}n^{s_2-k/2}\widetilde{B}(s_1,s_2,k)F(k/2-s_2,1-k/2-s_2;s_1-s_2+1;m/n) \\
    &\hspace{2mm}+\frac{(ac)^{s_1+k/2}}{(n-m)^{s_1+s_2}}\frac{n^{s_1-k/2}}{m^{s_1-s_2}}i^{s_1-2s_2-k/2}B(k/2-s_1,s_1-s_2)F(k/2-s_1,1-k/2-s_1;-s_1+s_2+1;m/n)
\end{align*}
where
\[\widetilde{B}(s_1,s_2,k)=i^{s_1+k/2}B(s_1+k/2,-s_2-k/2+1)+i^{s_1-2s_2-k/2}B(-s_2-k/2+1,s_2-s_1).\]
Recall that
\[U(s_1,s_2)=\frac{B(-s_2+k/2,s_2+k/2)}{c^{s_2+k/2}(ia)^{-s_2+k/2}}L(s_1,s_2)\]
and this derives:
\begin{align*}
 U(s_1,s_2)&=\frac{a^{s_1+s_2}c^{s_1-s_2}}{(n-m)^{s_1+s_2}}n^{s_2-k/2}B(-s_2+k/2,s_2+k/2)\\
 &\hspace{16mm}\times i^{s_2-k/2}\widetilde{B}(s_1,s_2,k)F(k/2-s_2,1-k/2-s_2;s_1-s_2+1;m/n) \\
&\hspace{4mm}+\frac{a^{s_1+s_2}c^{s_1-s_2}}{(n-m)^{s_1+s_2}}\frac{n^{s_1-k/2}}{m^{s_1-s_2}}B(-s_2+k/2,s_2+k/2)\\
&\hspace{16mm}\times i^{s_1-s_2+k}B(k/2-s_1,s_1-s_2)F(k/2-s_1,1-k/2-s_1;-s_1+s_2+1;m/n)   
\end{align*}
It can be shown:
\begin{align*}
 i^{s_2-k/2}\widetilde{B}(s_1,s_2,k)&=i^{s_1+s_2+k}B(s_1+k/2,-s_2-k/2+1)+i^{s_1-s_2+k}B(-s_2-k/2+1,s_2-s_1)\\
 &=i^{s_1+s_2}\frac{\Gamma(k/2+s_1)\Gamma(1-k/2-s_2)}{\Gamma(1+s_1-s_2)}+i^{s_1-s_2+k}\frac{\Gamma(1-k/2-s_2)\Gamma(s_2-s_1)}{\Gamma(1-k/2-s_1)}\\
 &=i^{s_1+s_2+k}\frac{\Gamma(k/2+s_1)\Gamma(s_2-s_1)\sin(\pi(s_2-s_1))}{\Gamma(k/2+s_2)\sin(\pi s_2)}+i^{s_1-s_2+k}\frac{\Gamma(k/2+s_1)\Gamma(s_2-s_1)\sin(\pi s_1)}{\Gamma(k/2+s_2)\sin(\pi s_2)}
\end{align*}
The last equality is due to Euler's reflection formula. 

Then we obtain (apply Euler's reflection formula again):
\begin{align*}
 i^{s_2-k/2}\widetilde{B}(s_1,s_2,k)+\overline{i^{\overline{s_2}-k/2}}\overline{\widetilde{B}(\overline{s_1},\overline{s_2},k)}&=2i^k\cos\left(\frac{\pi}{2}(s_2-s_1)\right)\frac{\Gamma(k/2+s_1)\Gamma(s_2-s_1)}{\Gamma(k/2+s_2)}\\
 &=\pi i^k\frac{\Gamma(k/2+s_1)}{\sin\left(\frac{\pi}{2}(s_2-s_1)\right)\Gamma(1+s_1-s_2)\Gamma(k/2+s_2)}
\end{align*}
and hence (a third time Euler's reflection formula)
\begin{align*}
U(s_1,s_2)+\overline{U(\overline{s_1},\overline{s_2})}&=\pi i^k\frac{a^{s_1+s_2}c^{s_1-s_2}}{(n-m)^{s_1+s_2}}n^{s_2-k/2}\frac{B(-s_2+k/2,s_2+k/2)\Gamma(k/2+s_1)}{\sin\left(\frac{\pi}{2}(s_2-s_1)\right)\Gamma(1+s_1-s_2)\Gamma(k/2+s_2)}\\
&\hspace{16mm}\times F(k/2-s_2,1-k/2-s_2;s_1-s_2+1;m/n) \\
&\hspace{8mm}+\frac{a^{s_1+s_2}c^{s_1-s_2}}{(n-m)^{s_1+s_2}}\frac{n^{s_1-k/2}}{m^{s_1-s_2}}B(-s_2+k/2,s_2+k/2)B(k/2-s_1,s_1-s_2)\\
&\hspace{16mm}\times 2i^{k}\cos\left(\frac{\pi}{2}(s_1-s_2)\right)F(k/2-s_1,1-k/2-s_1;-s_1+s_2+1;m/n)\\
&=\pi i^k\frac{a^{s_1+s_2}c^{s_1-s_2}}{(n-m)^{s_1+s_2}}n^{s_2-k/2}\frac{\Gamma(k/2-s_2)\Gamma(k/2+s_1)}{\sin\left(\frac{\pi}{2}(s_2-s_1)\right)\Gamma(1+s_1-s_2)\Gamma(k)}\\
&\hspace{40mm}\times F(k/2-s_2,1-k/2-s_2;s_1-s_2+1;m/n) \\
&\hspace{8mm}+\pi i^k\frac{a^{s_1+s_2}c^{s_1-s_2}}{(n-m)^{s_1+s_2}}\frac{n^{s_1-k/2}}{m^{s_1-s_2}}\frac{\Gamma(k/2+s_2)\Gamma(k/2-s_1)}{\sin\left(\frac{\pi}{2}(s_1-s_2)\right)\Gamma(1-s_1+s_2)\Gamma(k)}\\
&\hspace{40mm}\times F(k/2-s_1,1-k/2-s_1;-s_1+s_2+1;m/n)\\
\end{align*}
Insert it into \eqref{regular term 2}, and we conclude
\[
J_{\Reg}^2(\textbf{s},n)=\frac{2n^{k-1}\pi}{C_kn^{k/2}}\sum_{1\leq m\leq n-1}\sigma_{s_1+s_2}(n-m)\sigma_{s_1-s_2}(m)\widetilde{\phi}_k(n,m;s_1,s_2),
\]
with $\widetilde{\phi}_k(n,m;s_1,s_2)$ defined in Proposition \ref{prop. error 2}.

Notice that this is a finite sum and it is absolutely convergent for all $(s_1,s_2)\in\bC^2$ provided that $s_1-s_2$ is not an integer. This completes the proof of Proposition \ref{prop. error 2}.
\end{proof}

\section{The weighted second moment}

In this section, we collect all the main terms (coming from $J_{\si}(\textbf{s},n)$) and the error terms (arising from $J_{\Reg}(\textbf{s},n)$). This results in Theorem \ref{main theorem, weighted moment}, which coincides with the result presented in Kuznetsov's preprint.

\begin{remark}
    It should be noted that we do not prove why $J_{\si}(\textbf{s},n)$ contributes the main term, while $J_{\Reg}(\textbf{s},n)$ contributes the error term. For the estimation part, readers can refer to \cite{BF21} or \cite{WeiYangZhao2024}.
\end{remark}

\subsection{The regularization}\label{sec. regularization}
In this section, we complete the ``regularization'' process. Without loss of generality, we assume that $k\geq12$, as we are only considering level $1$ holomorphic cusp forms. Then by Proposition \ref{prop. singular} and Proposition \ref{prop. regular orbital integral}, $J_{\Geo}(\textbf{s},n)$ is absolutely convergent in the region   
 \begin{align}\label{absolutely convergnet region}
\begin{cases}
\Re(s_1+s_2)>1, \hspace{2mm} \Re(-s_1+s_2)>1\\
|\Re(s_1)|<\frac{k}{2}-1, \hspace{2mm} |\Re(s_2)|<\frac{k}{2}-1,\\
\mbox{$s_1-s_2$ is not an integer}
\end{cases}
\end{align}
which nonempty since $k\geq12.$ Moreover, this region has a nonempty intersection with the absolutely convergent region of $J_{\Spec}(\textbf{s},n)$ ($\Re(s_1),\Re(s_2)>\frac{1}{2}.$)

On the other hand, by Proposition \ref{prop. singular} and Proposition \ref{prop. regular orbital integral}, $J_{\Geo}(\textbf{s},n)$ has a meromorphic continuation in the region \eqref{absolutely convergnet region}. Furthermore, all poles in this region are removable singularities. Therefore, by the standard complex analysis theorems, we can evaluate $J_{\Geo}(\textbf{s},n)$ in this region.
\subsection{Proof of main theorem and main corollary}\label{proof of main theorem and main corollary}
Recall that
\[J_{\Spec}(\textbf{s},n)=J_{\Geo}(\textbf{s},n)=J_{\si}(\textbf{s},n)+J_{\Reg}(\textbf{s},n).\]
By the discussion in \S \ref{sec. regularization}, the weighted moment \eqref{weighted second moment} is absolutely convergent in the region \eqref{absolutely convergnet region} with $(0,0)$ a removable singularity.

Then the proof of Theorem \ref{main theorem, weighted moment} follows immediately from \eqref{spectral side of RTF}, Proposition \ref{prop. singular} and Proposition \ref{prop. regular orbital integral}.

\begin{cor}\label{main corollary. reduce to 0}
Assume that $k\geq12.$ Let $\varepsilon>0$, and $C_{\varepsilon}:=\{z\in\bC:|z|=\varepsilon\}.$ Then
\[\frac{2\pi^2}{k-1}\sum_{f\in H_k}\frac{\lambda_f(n)L(1/2,f)^2}{L(1,\sym^2f)}=M_2(n;0,0)+E(n;0,0).\]
Here
\begin{align*}
 M_2(n;0,0)&=\frac{4}{n^{1/2}}\cdot\frac{1}{2\pi i}\oint_{C_{\varepsilon}}\frac{\Gamma((s+k)/2)^2}{(2\pi)^{s}\Gamma(k/2)^2}\frac{\sigma_0(n)}{n^{s/2}} \zeta(1+s)\frac{\,ds}{s},
\end{align*}
and
\begin{align*} 
E(n;0,0)&=2\frac{(-1)^{k/2}}{n^{1/2}}\frac{\Gamma(k/2)\Gamma(k/2)}{\Gamma(k)}\sum_{m=1}^{\infty}\sigma_{0}(m)\sigma_{0}(n+m)\left(\frac{n}{m}\right)^{k/2}F(-k/2,k/2;k;-n/m)\\
&\hspace{2mm}+2\frac{1}{n^{1/2}}\sum_{1\leq m\leq n-1}\sigma_{0}(n-m)\sigma_{0}(m)\phi_k\left(\frac{m}{n}\right)\\
&\hspace{2mm}+2\frac{(-1)^{k/2}}{n^{1/2}}\frac{\Gamma(k/2)\Gamma(k/2)}{\Gamma(k)}\sum_{m\geq n+1}^{\infty}\sigma_{0}(m)\sigma_{0}(m-n)\left(\frac{n}{m}\right)^{k/2}F(k/2,k/2;k;n/m)
\end{align*}
with
\[\phi_k(x)=\left(-\log x-2\frac{\Gamma'}{\Gamma}\left(\frac{k}{2}\right)+2\frac{\Gamma'}{\Gamma}(1)\right)F(k/2,1-k/2;1;x)-\left.\left(\frac{\partial}{\partial\alpha}+\frac{\partial}{\partial\beta}+2\frac{\partial}{\partial\gamma}\right)F(\alpha,\beta;\gamma;x)\right|_{\substack{\alpha=k/2\\ \beta=1-k/2\\ \gamma=1}}.\]
\end{cor}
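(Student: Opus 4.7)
The identity of the main theorem,
\[
\frac{2\pi^2}{k-1}\sum_{f\in H_k}\frac{\lambda_f(n)L(\tfrac12+s_1,f)L(\tfrac12+s_2,f)}{L(1,\sym^2 f)} = M_2(n;s_1,s_2) + E(n;s_1,s_2),
\]
holds for generic $(s_1,s_2)$ near $(0,0)$. The LHS of the corollary is continuous at $(0,0)$, and by the main theorem $(0,0)$ is a removable singularity of the RHS $M_2+E$. The plan is to compute this limit by parametrizing $s_1=u+v,\ s_2=u-v$ and performing the iterated limit $v\to 0$ followed by $u\to 0$, which regularizes the two families of $\zeta$-poles of $M_2$ (those along $s_1+s_2=0$ and those along $s_1-s_2=0$) in turn.

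The four main-term pieces of $M_2$ split into pairs $(T_1,T_2)$ and $(T_3,T_4)$: the first pair has $\zeta(1\pm(s_1+s_2))$-poles along $s_1+s_2=0$ which cancel amongst themselves (direct verification from Laurent expansion, matching the $\sigma_{s_1-s_2}(n)$ and Gamma-ratio factors at $s_1+s_2=0$), while the second pair has $\zeta(1\mp(s_1-s_2))$-poles along $s_1-s_2=0$ that do not cancel internally and are absorbed instead by the $1/\sin(\pi(s_1\mp s_2)/2)$ singularities in $\phi_k(n,m;s_1,s_2)$ inside $E$. Taylor expanding $T_1+T_2$ along the diagonal $s_1=s_2=u$ gives, as $u\to 0$, the constant $\frac{2\sigma_0(n)}{n^{1/2}}[\gamma+\psi(k/2)-\log(2\pi)-\tfrac12\log n]$, which is $\frac{2}{n^{1/2}}\cdot\Res_{s=0}[G(s)/s]$ for $G(s)=\frac{\Gamma((s+k)/2)^2}{(2\pi)^s\Gamma(k/2)^2}\frac{\sigma_0(n)}{n^{s/2}}\zeta(1+s)$; by Cauchy's theorem this residue is exactly the contour integral $\frac{1}{2\pi i}\oint_{C_\varepsilon}$ of the corollary. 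This accounts for half of $M_2(n;0,0)$; the remaining half arises from the $v$-regularization of $T_3+T_4$ combined with the singular part of the $\phi_k$-sum in $E$, where Euler's reflection $\Gamma(-v)\Gamma(1+v)=-\pi/\sin(\pi v)$ rewrites the $1/\sin$ factors as $\Gamma$-ratios compatible with the $\zeta(1\mp(s_1-s_2))$ poles of $T_3,T_4$.

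The same $v$-limit simultaneously produces the finite residual $\sum_{1\le m\le n-1}\sigma_0(n-m)\sigma_0(m)\phi_k(m/n)$ of the corollary. The derivative combination $(\partial_\alpha+\partial_\beta+2\partial_\gamma)F|_{(\alpha,\beta,\gamma)=(k/2,1-k/2,1)}$ in $\phi_k(x)$ appears from first-order Taylor expansion, in $v$, of the two hypergeometric factors $F(k/2-s_2,1-k/2-s_2;s_1-s_2+1;m/n)$ and $F(k/2-s_1,1-k/2-s_1;-s_1+s_2+1;m/n)$ about $(0,0)$; the $-\log(m/n)$ term arises from expanding the $n^{\pm},m^{\pm}$ prefactors, and the $-2\psi(k/2)+2\psi(1)$ from the ratio $\Gamma(k/2\mp v)/\Gamma(1\pm v)$ (using $\psi(1)=-\gamma$). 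The $\psi_k$ and $\Phi_k$ pieces of $E$ are regular at $(0,0)$ (the factors $\cos(\pi(s_1\mp s_2)/2)$ tend to $1$); direct substitution yields the first and third sums of $E(n;0,0)$. The main obstacle will be the bookkeeping in the $v$-limit of $(T_3+T_4)+\phi_k$-sum: isolating the precise cancellation of the $1/v$ poles between the $\zeta$-Laurent tails and the $1/\sin$-expansions, and identifying the finite remainder as the stated derivative combination. Modulo this computation, the corollary follows by invoking the regularization of the spectral--geometric identity of \S \ref{sec. regularization}.
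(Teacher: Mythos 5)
Your route — substituting $s_1=u+v,\ s_2=u-v$ and computing iterated Laurent limits $v\to0$, then $u\to0$ — is genuinely different from the paper's, which is almost entirely citation-based: the paper notes that $M_2(n;0,0)$ is computed in \cite[Section 6 and Theorem B]{WeiYangZhao2024}, that the $\psi_k$- and $\Phi_k$-pieces of $E$ are regular at $(0,0)$ and can be substituted directly, and that $\lim_{(s_1,s_2)\to(0,0)}\phi_k(n,m;s_1,s_2)$ is carried out in \cite[Section 5.1]{BF21}. Your outline of the derivative combination $(\partial_\alpha+\partial_\beta+2\partial_\gamma)F$ arising from the first-order $v$-expansion of the hypergeometric parameters, and of the $\log$ and digamma contributions from the prefactors, is essentially what a self-contained version of the $\phi_k$-limit must do, and would be a welcome alternative to the citation.

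However, there is a genuine error in your structural claim about pole cancellation. You assert that the $\zeta(1\mp(s_1-s_2))$-poles of $T_3+T_4$ do not cancel internally and must instead be absorbed by the $1/\sin\!\left(\frac{\pi}{2}(s_1\mp s_2)\right)$-singularities of the $\phi_k$-sum inside $E$; you then attribute ``the remaining half of $M_2(n;0,0)$'' to that cross-cancellation. This is wrong. Once the evident typo $\Gamma(s_2+k)\to\Gamma(s_2+k/2)$ in the statement of the main theorem is corrected (it is forced by Proposition \ref{prop. singular}, and without it $\lim_{s\to0}M_2(n;s,0)$ in the paper's proof would not even exist), the prefactors of $T_3$ and $T_4$ agree identically on the line $s_1=s_2$, so their $\zeta$-poles cancel internally — exactly as the poles of $T_1+T_2$ cancel on $s_1+s_2=0$. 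Likewise, the two terms of $\phi_k(n,m;s_1,s_2)$ carry $\frac{1}{\sin(\frac{\pi}{2}(s_2-s_1))}$ and $\frac{1}{\sin(\frac{\pi}{2}(s_1-s_2))}$ respectively, which are equal and opposite, with equal residual prefactors on the diagonal, so $\phi_k$ also has a removable singularity along $s_1-s_2=0$ on its own. There is no cross-cancellation between $M_2$ and $E$: this is precisely what allows the paper to write the limit as $\lim_{s\to0}M_2(n;s,0)+\lim_{(s_1,s_2)\to(0,0)}E(n;s_1,s_2)$, with each summand well-defined. The correct arithmetic is that $\lim(T_3+T_4)=(-1)^{k/2}\cdot\lim(T_1+T_2)$, so when $(-1)^{k/2}=1$ the full $M_2(n;0,0)$ is twice your $T_1+T_2$ contribution, which is where the factor $4/n^{1/2}$ in the contour-integral formula comes from, with no admixture from $E$.
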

\begin{proof}
By Theorem \ref{main theorem, weighted moment}, 
\[\frac{2\pi^2}{k-1}\sum_{f\in H_k}\frac{\lambda_f(n)L(1/2,f)^2}{L(1,\sym^2f)}=\lim_{s\to0}M_2(n;s,0)+\lim_{\substack{s_1\to0\\s_2\to0}}E(n;s_1,s_2):=M_2(n;0,0)+E(n;0,0).\]
The calculation of $M_2(n;0,0)$ is in \cite[Section 6]{WeiYangZhao2024}, and an explicit form can be found in \cite[Theorem B]{WeiYangZhao2024}.

For $E(n;0,0),$ the only difficult term is from $\phi_k(n,m;s_1,s_2),$ as the other two terms can be evaluated directly. The calculation of $\lim_{\substack{s_1\to0\\s_2\to0}}\phi_k(n,m;s_1,s_2)$ is in \cite[Section 5.1]{BF21}.
\end{proof}

\bibliographystyle{alpha}	
\bibliography{CRTF}

\end{document}